\newcommand\nc{\newcommand}
\nc{\od}[2]{\frac{d#1}{d#2}}
\nc{\be}{\begin{equation}}
\nc{\ee}{\end{equation}}
\nc{\bd}{\begin{displaymath}}
\nc{\ed}{\end{displaymath}}
\nc{\bq}{\begin{eqnarray}}
\nc{\eq}{\end{eqnarray}}
\nc{\p}{\partial}
\nc{\ra}{\rightarrow}
\nc{\R}{\mathbb{R}}
\nc{\dx}{\mathrm{d}x}
\nc{\curl}{\operatorname{curl}}
\nc{\dv}{\operatorname{div}}
\nc{\meas}{\operatorname{meas}}
\nc{\supp}{\operatorname{supp}}
\nc{\medotimes}{\resizebox{11pt}{!}{$\otimes$}}
\nc{\T}{\mathrm{T}}
\nc{\wk}{\rightharpoonup}
\nc{\ep}{\varepsilon}
\nc{\hep}{\hat{\ep}} 
\nc{\Id}{\mathrm{Id}}
\nc{\comment}[1]{{\tt \color{red}#1}}
\newtheorem{proposition}{Proposition}[section]
\newtheorem{theorem}[proposition]{Theorem}
\theoremstyle{remark}
\newtheorem{remark}[proposition]{Remark}
\theoremstyle{definition}
\title{Relaxation of the single-slip condition in strain-gradient plasticity}
\author{Keith Anguige and Patrick W.~Dondl}
\begin{document}

\maketitle

\begin{abstract}
We consider the variational formulation of both geometrically linear and geometrically nonlinear elasto-plasticity subject to a class of hard single-slip conditions. Such side conditions typically render the associated boundary-value problems non-convex. We show that, for a large class of non-smooth plastic distortions, a given single-slip condition (specification of Burgers vectors) can be relaxed by introducing a microstructure through a two-stage process of mollification and lamination. The relaxed model can be thought of as an aid to simulating macroscopic plastic behaviour without the need to resolve arbitrarily fine spatial scales.
\end{abstract}

\section{Introduction}\label{sec:1}

Complex subgrain dislocation patterns in plastically deformed metallic crystals have been observed by many authors \cite{han,hug,hug2,jin,nix}, and these dislocation microstructures are known to affect greatly several aspects of plastic behaviour, such as work hardening, and the Bauschinger and Hall-Petch effects. Currently, attempts at modelling such behaviour are often phenomenological, which leads to significant problems for specimens on the micron scale \cite{uchic,dem}. Nevertheless, predictive models for pattern formation \emph{have} been proposed, for example in the seminal work by Ortiz and Repetto \cite{ortiz}. Such models are based on a non-quasiconvex energy minimization at each time increment of the evolution, the non-convexity resulting from the assumption of latent hardening in single-crystal plasticity, which means that if dislocations from different slip systems meet, then they form energetically favorable (but sessile) atomistic reaction products, such as the so-called Lomer-Cottrell locks. 
This increases the 
dissipation potential for any plastic 
deformation that does not occur in single slip.

Here, we proceed as in \cite{ortiz}, and in addition make the simplifying assumption of infinite latent hardening (i.e. that the material should deform in single slip at each point), thus resulting in a strain-gradient-plasticity model. This is akin to the approach adopted in, say, \cite{dolz,dolz2}, amongst many other works, or, to take another example, \cite{dm}, where it was shown that, for a simple shear experiment, the predictions made by non-convex single-slip models can indeed be observed in plastically deformed single crystals. Despite our emphasis on the single-slip condition here, however, it should be noted that the our main result, namely that such non-convexities should always be partially relaxed, still holds for finite latent hardening between slip systems in a single slip-plane, at least as long as this is implemented as a direction-dependent dissipation (see Remark~\ref{rem:non-infinite}). 

In incremental models of non-convex crystal plasticity, a strain-gradient penalty term of type $\int |\curl \beta|$, where $\beta$ is the plastic strain, is commonly added \cite{co}, and is actually necessary in some cases if one wants to make the hardening physically realistic~\cite{Fokoua_14}. In rectangular shear samples of small size, the strain-gradient energy and single-slip condition together play a dramatic role, as we showed analytically in \cite{ang}. Specifically, for a strain-gradient model of a single crystal with B2 symmetry, there are three qualitatively different energy-scaling regimes, which are determined by the  aspect ratio, L : when L is small the energy scales quadratically (i.e. elastically) with the shear, when L is large one sees easy shear-band formation, and for intermediate values of L there is a regime in which the energy scales linearly with the applied shear. One consequence of this is that a micron-sized sample consisting of only a few grains will easily shear off if a slip 
system under 
stress 
connects free surfaces \cite{uchic}. 

In this article, we show that such a strain-gradient penalty term is \emph{not} sufficient to regularize the problem in the sense of making the associated elasto-plastic energy lower semicontinuous. In some sense, this problem was already discovered in numerical simulations, for example by Hildebrand \emph{et al.} \cite{hild}. These authors circumvent the problem by introducing an ultra-fine full-gradient-penalty regularisation -- this, however, introduces an even finer length scale which is very difficult to treat numerically in an industrial context. A justification for models that include a strain-gradient-type penalisation of geometrically necessary dislocations (GNDs), and also potentially admit a non-convexity in the dissipation (implemented through a hardening matrix in the flow rule) can be found in~\cite{Acharya_2000}, while a recent solution-method in the framework of finite-element modeling is described in~\cite{Ma_2006}. We believe that some improvement to the numerical stability of these models 
is possible if the non-convexity is \emph{relaxed} according to our prescription, which we now describe.

Continuing in the spirit of \cite{ang}, and motivated by the remarks above, in this paper we will thus consider the problem of relaxing the geometrically linear energy where
\be
\label{eq:main_energy}
E(u,\beta) =
\left\{
\begin{array}{ccl}
\int_{\Omega} | (\nabla u - \beta)_\mathrm{sym}|^2\dx + 
\sigma\|\curl_{\mathcal{M}}\beta\|_{\Omega} + \tau\int_{\Omega} |\beta|\dx & : &\textbf{(SSC)}~\textrm{holds}\\
& & \\
+\infty &:&\textrm{otherwise} 
\end{array}
\right.,
\ee
subject to a family of Dirichlet conditions at $\Gamma_D\subset\p\Omega$ on the vector-valued displacement $u~\colon \Omega\to \R^3$, where $\mathcal{M} = \{m_j: j=1,\ldots, M\}$ is a family of slip normals, and (SSC) represents a class of \emph{single-slip} conditions (to be defined shortly) on the matrix-valued plastic deformations, $\beta = s\otimes m\colon \Omega\to \R^{3\times 3}$. Here, the subscript `$\mathrm{sym}$' denotes taking the symmetric part of the matrix in parenthesis. The first term in the integrand of~\eqref{eq:main_energy} is the linearised elastic energy of the specimen, and the second term penalises GNDs, which are quantified by the row-wise curl of the Nye tensor multiplied by the line-tension parameter $\sigma\geq 0$, taken \emph{individually} for each slip plane $m_j^{\perp}$. To be more explicit, assuming that $\Omega_j$ is the set where $\beta$ can be written as $\beta = s\otimes m_j$, with $m_j$ pointing in the $x^1$-direction, and $s\in m_j^{\perp}$, we define
\be
\|\curl_{m_j}\beta\|_{\Omega} =  \int_{\Omega} |\curl_{m_j}\beta|~\mathrm{d}x^1,\label{curl}
\ee
where the {\em planar curl density}, $|\curl_{m_j}\beta|$, is given by
\be\label{TV}
|\curl_{m_j}\beta|(\Omega\cap\{x^1=t\}) = \sup_{
\substack{\phi\in\left(C^1(\Omega\cap\{x^1=t\})\right)^{2\times 2}\\
|\phi|\leq 1}}
\sum_{i = 2,3}\iint_{\Omega\cap\{x^1=t\}}\chi_{\Omega_j}s_i(y,z)(\dv\phi)_i~\mathrm{d}y\mathrm{d}z,
\ee
and where we have used (and often {\em will} use) $y$ and $z$ instead of $x^2$ and $x^3$ for brevity. With this in hand, we define
\be
\|\curl_{\mathcal{M}}\beta\|_{\Omega} = \sum_{j=1}^M \|\curl_{m_j}\beta\|_{\Omega}.
\ee

Thus, for $\beta\in L^1$ or $L^2$, say, $\|\curl_{\mathcal{M}}\beta\|$ is finite iff $\beta\in BV$ on almost every slip plane \emph{and} the planar total-variation norms are integrable in the normal direction. For convenience, we will sometimes use the total-variation notation $|\curl\beta| = |D_{y,z}\beta|$.

Note that our rather cumbersome definition of the curl was necessary in order to avoid the introduction of spurious dislocation-cancellation effects. On the other hand, we \emph{are} making the assumption that colliding dislocations from different slip planes do \emph{not} cancel -- for a discussion of this assumption in a simplified scalar model, see~\cite[Chapter 4]{co}. In the two dimensional model used in~\cite{ang}, such cancellations naturally do not occur. 

The third term in (\ref{eq:main_energy}) represents a rate-independent dissipation, such that $\tau\geq 0$ measures the (isotropic) critical resolved shear stress. One could also consider, instead, a more general hardening term of the form $\int |\beta|^{\alpha}$ with $0<\alpha\le 2$, and our results can be generalised, in a straightforward manner, to include this. For the special case $\alpha<1$, the dissipation relaxes to zero, leading to slip-band formation and the treatment of the singularities that arise here (see Remark \ref{rem:non-infinite}). For a recent discussion of such sublinear dissipation in the context of strain-gradient plasticity, we refer to~\cite{Fokoua_14}.

Relaxation of the energy functional (\ref{eq:main_energy}) was only briefly alluded to in \cite{ang}, in the course of proving an energy upper bound for a shear experiment on a B2 crystal. Here we wish to derive the relaxed model rigorously for a class of slip systems which includes the case of B2 symmetry. On general theoretical grounds, one expects the relaxed model to reproduce the correct macroscopic plastic behaviour of the single crystal, and hence to facilitate efficient numerical simulation of this behaviour without the need to resolve arbitrarily fine spatial scales.

The slip conditions considered in this article are as follows. We will assume there are $M$ slip planes available, and, as already mentioned, the set of possible slip-plane normals, $m_i$, $i=1,2,\ldots,M$, will be denoted by $\mathcal{M}$. In each $m_i^{\perp}$ there will be two arbitrary \emph{normalised} Burgers vectors, $b_{j,i}$, $j=1,2$, and the single-slip (resp. relaxed-slip) conditions on $\beta=s\otimes m$ read
\begin{description}
\item[(SSC)] For a.e. $x = (x^1,x^2,x^3)\in \Omega$, there holds~$m = m_{i(x)}\in\mathcal{M}$ and $s(x)\in\mathrm{Sp}_{j = 1,2}\{b_{j,i(x)}\}$.   
\item[(RSC)] For a.e. $x = (x^1,x^2,x^3)\in \Omega$, there holds~$m = m_{i(x)}\in\mathcal{M}$ and $s(x)\in m^{\perp}_{i(x)}$,
\end{description}
such that \textbf{(SSC)} includes the B2 lattices that were considered in~\cite{ang}. The conditions also allow the results to be applied to the most important cases, namely those of fcc and bcc crystal lattices, in the sense that our upper bound for the relaxed energy is perfectly valid. In such materials there are, of course, more than two Burgers vectors in each slip plane. It is then potentially possible to reduce the curl-term further, by choosing an optimal decomposition of the relaxed slip into single slip. Such an additional minimisation can easily be included in a numerical simulation.

Our main result can now be described heuristically as follows: in any elasto-plastic model of type~\ref{eq:main_energy}, one should, for the purposes of numerical simulation, use the relaxed-slip condition (RSC) instead of the single-slip condition (SSC). As can be seen in Proposition~\ref{prop:rel_energy}, this also requires a slight modification of the strain-gradient term and the dissipation, but the new terms can be explicitly computed from the slip-system configuration and the original strain: Theorems~\ref{thm:main} and~\ref{thm:nonlinear} constitute a precise statement of all this.

The relaxation result is proved rigorously in two stages. First, starting with a sufficiently smooth, relaxed pair $(u,\beta)$, we derive a relaxed energy from (\ref{eq:main_energy}) by laminating between the two Burgers vectors, $b_{j,i}$, on each slip patch (the second index will be dropped when working on a fixed slip patch, $m_i = \mathrm{const.}$). Next, we note that the relaxed energy also makes sense for a larger class of non-smooth relaxed slips, and the second step of our procedure is to show that such slips can be mollified without increasing the relaxed energy. This is highly nontrivial, since applying a standard Friedrichs mollifier to a non-smooth $\beta$ can violate the relaxed side condition near slip-patch boundaries.

The paper is organised as follows. In Section \ref{sec:2} we show how to laminate a smooth, relaxed plastic strain, and we derive the relaxed energy. Section \ref{sec:3} is concerned with mollifying non-smooth slips while preserving \textbf{(RSC)}, and in Section \ref{sec:5} we state a summary of our results in the geometrically linear case. Finally, in Section \ref{sec:4} we give a brief treatment of the corresponding results in the geometrically nonlinear case, where the elasto-plastic decomposition of the deformation is multiplicative, rather than additive.

\section{Lamination, relaxed energy}\label{sec:2} 

Suppose we have a displacement $u\in H^1$ on $\Omega$ satisfying some Dirichlet condition, and a~ $C^\infty$, relaxed plastic distortion $\beta = s\otimes m$, $s\in m^{\perp}$, $m\in\mathcal{M}$, which has compact support in a Lipschitz domain $\Omega_1\subset\subset\Omega$, and denote by $c_j$, $j=1,2$, the components of $s$ with respect to the decomposition along normalised Burgers vectors $b_j$, so that 
\be
s = \sum_{j=1}^2 c_j b_j,\label{c}
\ee
and $c_j\in C^{\infty}$.

Suppose also that we choose Cartesian coordinates $(x^1,x^2,x^3) = (x,y,z)$ in $\Omega_1$ so that the orthonormal basis vectors, $e_i$, are arranged with $e_1 = m$. 

\subsection{Laminating $\beta$}
Now laminate $\beta$ by filling $\Omega_1$ with a stack of bi-layers, each parallel to $m^{\perp}$ and having thickness $\frac{1}{2^n}$, $n\in\mathbb{N}$, and then defining on each successive bi-layer an alternating (in the $x^1$-direction), unrelaxed plastic distortion, $\beta_n$, by
\be
\beta_n =
 \left\{
\begin{array}{lcl}
2c_1 b_1\otimes m & : & \textrm{top slice}\\
2c_2 b_2\otimes m & : & \textrm{bottom slice}\\
\end{array}
\right.,\label{slice}
\ee  
where the $c_j$ are evaluated on the centre-plane of bi-layer in (\ref{slice}), all slices have the same thickness, and, to be concrete, the $(n+1)$-th laminate is obtained from the $n$-th by bisecting each of the bi-layers along a slip plane. For the moment, we have neglected to describe precisely what happens in the neighbourhood of $\p\Omega_1$ -- see, however, below.

Since $\beta$ is assumed smooth, it is easy to see that $\beta_n\wk\beta\in L^p(\Omega_1)$ for any $p\in[1,\infty)$ as $n\ra\infty$. For example, note that simple functions are dense in $L^{p'}$, the family $\beta_n$ is uniformly bounded in $L^\infty$ and the proof that $\int\beta_n g\ra \int\beta g$ as $n\ra\infty$ for $g$ simple is trivial. In the same way, the analogous result for weak $L^1$ convergence also follows.

Defining $L^1_t= L^1(\Omega_t)$, where $\Omega_t = \Omega_1\cap \{x^1 = t\}$, we also immediately see that the strain-gradient energy satisfies
\be
\int_{\Omega_1} \|D_{y,z}\beta_n\|_{L^1_{t'}}\mathrm{dt}'\ra\sum_{j=1}^2 \int_{\Omega_1}\|D_{y,z}c_j\|_{L^1_{t'}}~\mathrm{d}t',
\label{lam_curl}
\ee
as $n\ra\infty$, whereby the rhs also makes sense for 
\be
c_j : I_{\Omega_1}\ni t\mapsto BV(\Omega_t)~\textrm{with}~\int_{I_{\Omega_1}}|D_{y,z}c_j|~\mathrm{d}t<\infty,~\textrm{such that}~I_{\Omega_1}= \{t\in\mathbb{R} : \Omega_t\neq\emptyset\}.
\ee
We call the integral on the rhs of (\ref{lam_curl}) the {\em laminated curl} of $\beta$, denoted by $\|\curl_m\beta\|_{\mathrm{lam}}$, and the corresponding integrand the {\em laminated curl density}, $|\curl_m\beta|_{\mathrm{lam}}$: thus, $\|\curl_m\beta\|_{\mathrm{lam}} = \int |\curl_m\beta|_{\mathrm{lam}}~\dx^1$.

Meanwhile, for the dissipation term we get
\be
\int_{\Omega_1} |\beta_n|~\dx\ra\sum_{i=1}^2\int_{\Omega_1} |c_i|~\dx\label{hard},
\ee
as $n\ra\infty$, and we call the right-hand integral in (\ref{hard}) the {\em laminated hardening} of $\beta$, denoted by $\|\beta\|_{1-\mathrm{lam}}$.

We have the following inequalities for the laminated curl and the laminated hardening.

\begin{proposition}
If we have a fixed $m_j\in\mathcal{M}$, $s\in BV((\Omega_1)_t,m_j^{\perp})$, some $t\in\mathbb{R}$, then we have, for $\beta = s\otimes m_j$ and $\beta = \sum_i c_i b_i$,
\be
|\curl_{m_j}\beta|\leq (|b_2.b_1| + |b_2.b_1^{\perp}|)|\curl_{m_j}\beta|_{\mathrm{lam}}\leq\sqrt{2}|\curl\beta|_{\mathrm{lam}}.\label{equiv2}
\ee
and
\be
|\curl_{m_j}\beta|_{\mathrm{lam}}\leq\sqrt{2}\left(\frac{1 + |b_1.b_2|}{|b_1^{\perp}.b_2|}\right)|\curl_{m_j}\beta|.\label{equiv1}
\ee
Moreover, the same inequalities hold for $\|\curl_{m_j}\beta\|_{\mathrm{lam}}$ relative to $\|\curl_{m_j}\beta\|$ when $\|\curl_{m_j}\beta\|$ is finite on $\Omega_1\subset\mathbb{R}^3$, and for $\|\beta\|_{1-\mathrm{lam}}$ relative to $\|\beta\|_1$ when $\beta\in L^1(\Omega_1)$.
\end{proposition}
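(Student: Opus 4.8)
The plan is to work pointwise on a fixed slip plane $\Omega_t = (\Omega_1)_t$ and reduce everything to a linear-algebra comparison between the two descriptions of $\beta$: as $s\otimes m_j$ with $s = c_1 b_1 + c_2 b_2$, and as the pair of scalar BV functions $(c_1, c_2)$. Since $m_j$ points in the $x^1$-direction, the planar curl $|\curl_{m_j}\beta|$ is (up to the convention in \eqref{TV}) the total variation $|D_{y,z}s|$ of the $\R^2$-valued map $s$ on $\Omega_t$, while the laminated curl density is, by \eqref{lam_curl}, $|D_{y,z}c_1| + |D_{y,z}c_2|$. So both inequalities in \eqref{equiv2}--\eqref{equiv1} are equivalent to norm-equivalence statements for the change of basis $s\mapsto(c_1,c_2)$ on $\R^2$, i.e.\ for the matrix $B$ whose columns are $b_1, b_2$ (both unit vectors, linearly independent since they span $m_j^{\perp}$, hence $|b_1^{\perp}\cdot b_2|\neq 0$).

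First I would fix the measure-theoretic point that the total variation is "monotone" under composition with a linear map: if $L\colon\R^2\to\R^k$ is linear and $v\in BV(\Omega_t,\R^2)$, then $|D(Lv)| \le \|L\|\,|Dv|$ as measures, and conversely $|Dv|\le\|L^{-1}\|\,|D(Lv)|$ when $L$ is invertible — this follows directly from the distributional definition of $D$ together with $|D(Lv)|(\Omega_t) = |L\,Dv|(\Omega_t) \le \|L\|\,|Dv|(\Omega_t)$, and localises to arbitrary open subsets, hence to the density. Applying this with $Lv = B v = s$ (so $v = (c_1,c_2)$) gives $|D s| \le \|B\|\,|D(c_1,c_2)|$ and $|D(c_1,c_2)| \le \|B^{-1}\|\,|D s|$. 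The remaining task is purely to compute/estimate the relevant operator norms of $B$ and $B^{-1}$ in terms of the dot products $b_1\cdot b_2$ and $b_1^{\perp}\cdot b_2$, and to match them to the stated constants. For \eqref{equiv2}: writing $s = c_1 b_1 + c_2 b_2$ and projecting onto $b_1$ and $b_1^{\perp}$ gives $s\cdot b_1 = c_1 + c_2 (b_1\cdot b_2)$ and $s\cdot b_1^{\perp} = c_2(b_1^{\perp}\cdot b_2)$; this lets one bound $|c_2|$ and then $|c_1|$ linearly in $|s|$, and comparing the total variation of $c_i$ componentwise (using $\sum_i|D c_i| = |D(c_1,c_2)|$ up to the $\ell^1$/$\ell^2$ equivalence on $\R^2$, which supplies the $\sqrt 2$) with $|Ds|$ yields the constant $|b_2\cdot b_1| + |b_2\cdot b_1^{\perp}|$ — actually I expect it is cleanest to bound $|Ds|\le |D(c_1 b_1)| + |D(c_2 b_2)| = |Dc_1| + |Dc_2|$ (since $b_j$ are unit vectors), then bound each $|Dc_i|$ by an $\ell^2$-combination of the components of $Ds$ in the $\{b_1,b_1^\perp\}$ frame, which is exactly where the $\sqrt2$ and the factor $(1+|b_1\cdot b_2|)/|b_1^\perp\cdot b_2|$ in \eqref{equiv1} come from. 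The last sentence of the Proposition — passing from the pointwise-on-$\Omega_t$ statement to $\|\cdot\|$ on $\Omega_1\subset\R^3$ and to $\|\cdot\|_1$ — is then immediate by integrating the pointwise inequality in $x^1$ over $I_{\Omega_1}$ (for the curl), and, for the hardening, by the same change of basis applied without derivatives: $|s|\le |c_1| + |c_2|$ and $|c_i|\le(\text{const})|s|$ pointwise a.e., then integrate over $\Omega_1$.

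The main obstacle, such as it is, is bookkeeping rather than depth: one must be careful that the total variation of an $\R^2$-valued BV function is taken with the Euclidean norm on the target (as in \eqref{TV}), so that the change-of-basis matrix acts by its Euclidean operator norm, while the laminated curl density $|Dc_1| + |Dc_2|$ is an $\ell^1$-sum of scalar variations; reconciling these two conventions is precisely what produces the $\sqrt2$ factors, and getting the direction of each inequality and the exact form of the constants right (rather than merely some constant) requires writing out the $2\times2$ change of basis explicitly in the orthonormal frame $\{b_1, b_1^{\perp}\}$ and reading off the entries. I would also need to check the degenerate edge cases are excluded by hypothesis — $b_1, b_2$ span $m_j^\perp$ guarantees $b_1^\perp\cdot b_2\neq 0$, so the constant in \eqref{equiv1} is finite — and that the constant $|b_2\cdot b_1| + |b_2\cdot b_1^\perp|\le\sqrt2$ follows from $|b_2|=1$ together with $b_1\perp b_1^\perp$, $|b_1|=|b_1^\perp|=1$ and Cauchy–Schwarz, which gives the second inequality in \eqref{equiv2} for free.
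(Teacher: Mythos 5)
Your proposal is correct and follows essentially the same route as the paper: working in the orthonormal frame $\{b_1,b_1^{\perp}\}$ (the paper takes $b_1=e_2$), reading off $s\cdot b_1 = c_1 + c_2(b_1\cdot b_2)$ and $s\cdot b_1^{\perp}=c_2(b_1^{\perp}\cdot b_2)$, applying the triangle inequality for the total variation, and attributing the $\sqrt{2}$ to the joint-versus-separate constraint on the test functions (equivalently the $\ell^1$/$\ell^2$ discrepancy), then integrating in $x^1$ for the global statements. The only minor divergence is your alternative bound $|Ds|\leq|Dc_1|+|Dc_2|$, which actually gives a sharper constant $1\leq|b_2\cdot b_1|+|b_2\cdot b_1^{\perp}|$ and hence still implies \eqref{equiv2}.
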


\begin{proof}
All integrals in the following are taken with respect to $dydz$, at some fixed value of the $x$-coordinate. We may suppose w.l.o.g. that $b_1 = e_2$, which implies, using the notation above, that $s_2 = c_1 + (b_2.e_2)c_2$ and $s_3 = (b_2.e_3)c_2$. 

Then, using the triangle inequality for the total variation,
\bq
|\curl_{m_j}\beta| & = & \sup_{
\left|
\begin{array}{cc}
\phi_2 & \phi_3\\
\psi_2 & \psi_3
\end{array}
\right|\leq 1}
\int_{\Omega_1} (c_1 + (b_2.e_2)c_2)\dv\phi + (b_2.e_3)c_2\dv\psi\\
& \leq & \sup_{|\phi|\leq 1}\int_{\Omega_1} (c_1 + (b_2.e_2)c_2)\dv\phi + |b_2.e_3|\sup_{|\psi|\leq 1}\int_{\Omega_1} c_2\dv\psi \\
& \leq & \sup_{|\phi|\leq 1}\int_{\Omega_1} c_1\dv\phi + |b_2.e_2|\sup_{|\phi|\leq1}\int_{\Omega_1} c_2\dv\phi + |b_2.e_3|\sup_{|\psi|\leq 1}\int_{\Omega_1} c_2\dv\psi\\
& \leq & (|b_2.b_1| + |b_2.b_1^{\perp}|)|\curl_{m_j}\beta|_{\mathrm{lam}}.
\eq
Since $(|b_2.b_1| + |b_2.b_1^{\perp}|)$ is maximised when $b_1$ and $b_2$ meet at an angle of $\frac{\pi}{4}$, we also obtain the final inequality of (\ref{equiv2}).

Turning to (\ref{equiv1}), since
\be
c_2 = \frac{s_3}{b_2.e_3},\qquad c_1 = s_2 - \frac{(b_2.e_2)}{(b_2.e_3)}s_3,
\ee
and we have
\bq
|\curl_{m_j}\beta|_{\mathrm{lam}} & = & \sup_{|\phi|\leq1}\int_{\Omega_1} c_1\dv\phi + \sup_{|\psi|\leq 1}\int_{\Omega_1} c_2\dv\psi\\
& \leq &  \sup_{|\phi|\leq1}\int_{\Omega_1} s_2 \dv\phi + \left(\frac{1 + |b_2.e_2|}{|b_2.e_3|}\right)\sup_{|\psi\leq 1}\int_{\Omega_1} s_3\dv\psi\\
 & \leq &  \sup_{
\left|
\begin{array}{cc}
\phi_2 & \phi_3\\
\psi_2 & \psi_3
\end{array}
\right|\leq 1}
\sqrt{2}\left(\int_{\Omega_1} s_2\dv\phi + \left(\frac{1 + |b_2.e_2|}{|b_2.e_3|}\right)s_3\dv\psi
\right)\\
 & \leq & \sqrt{2}\left(\frac{1 + |b_1.b_2|}{|b_1^{\perp}.b_2|}\right)|\curl_{m_j}\beta|,
\eq
as required.

The remaining assertions of the proposition follow in a similar way.
\end{proof}

\begin{remark}
Inequality (\ref{equiv1}), with $b_1.b_2=0$, was implicitly used in \cite{ang} to get the energy upper bound for a particular shear experiment on a B2 single-crystal. 
\end{remark}

Next, the laminated curl also has the desirable property of being convex on a given slip patch.
\begin{proposition}
For fixed $\Omega_1\subset\mathbb{R}^3$, $m_j\in\mathcal{M}$, $t\in\mathbb{R}$ and $s\in BV((\Omega_1)_t,m_j^{\perp})$, the mapping $s\mapsto |\curl(s\otimes m_j)|_{\mathrm{lam}}(\Omega_1)_t$ is convex. The same assertion holds for the total laminated curl if $s:\Omega_1\mapsto m_j^{\perp}$ is such that $\|\curl_{m_j}\beta\|_{\mathrm{lam}}(\Omega_1)<\infty$, and also for the laminated hardening. 
\end{proposition}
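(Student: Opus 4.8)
The plan is to exploit the linearity of the decomposition \eqref{c}. On a fixed slip patch, $\{b_1,b_2\}$ is a basis of the plane $m_j^{\perp}$, so $(c_1,c_2)\mapsto c_1b_1+c_2b_2$ is a linear isomorphism $\R^2\to m_j^{\perp}$; its inverse $s\mapsto(c_1(s),c_2(s))$ is therefore linear, and this holds pointwise, hence defines a bounded linear map on the relevant function spaces over $(\Omega_1)_t$ (and over $\Omega_1$). In particular $c_i(\lambda s^a+(1-\lambda)s^b)=\lambda c_i(s^a)+(1-\lambda)c_i(s^b)$ for $\lambda\in[0,1]$.

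Next I would record that the building blocks are convex. By the definition \eqref{TV}, the planar total variation $w\mapsto|D_{y,z}w|\big((\Omega_1)_t\big)$ is a pointwise supremum, over admissible test fields $\phi$, of the linear functionals $w\mapsto\iint w\,(\dv\phi)\,\mathrm{d}y\mathrm{d}z$, and a supremum of affine (in particular convex) functionals is convex; likewise $w\mapsto\int_{(\Omega_1)_t}|w|$ is convex, being a seminorm. Now $|\curl(s\otimes m_j)|_{\mathrm{lam}}\big((\Omega_1)_t\big)=\sum_{i=1}^2|D_{y,z}c_i(s)|\big((\Omega_1)_t\big)$ by \eqref{lam_curl}, so it is a sum of two convex functionals precomposed with the linear map $s\mapsto(c_1(s),c_2(s))$, hence convex; this is the first assertion. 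For the total laminated curl, one has $\|\curl_{m_j}\beta\|_{\mathrm{lam}}=\int_{I_{\Omega_1}}|\curl_{m_j}\beta|_{\mathrm{lam}}\,\dx^1$, and since the slicewise convexity inequality holds on every $\{x^1=t\}$ and both endpoints are assumed to have finite total laminated curl (so the integrands are integrable), integrating in $t$ preserves the inequality, giving convexity. The laminated hardening $\|\beta\|_{1-\mathrm{lam}}=\sum_i\int_{\Omega_1}|c_i(s)|\,\dx$ is treated in exactly the same way, with the $L^1$-seminorm in place of the planar total variation.

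I do not expect a genuine obstacle: the content is essentially ``convex composed with linear is convex, and sums and integrals of convex functionals are convex.'' The one point worth emphasising is \emph{why} this works only after relaxation and lamination: in the unrelaxed functional it is the indicator $\chi_{\Omega_j}$ and the hard span constraint in \textbf{(SSC)} that spoil convexity, whereas the laminated curl has replaced these by the \emph{unconstrained} $BV$-seminorms of the coordinate functions $c_i$, precomposed with a fixed linear change of variables, which is automatically convex in $s$.
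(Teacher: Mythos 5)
Your proof is correct and follows essentially the same route as the paper: both arguments reduce to the linearity of $s\mapsto(c_1,c_2)$ together with the convexity of the planar total variation (the paper phrases the latter as the triangle inequality for the sup in \eqref{TV}, you as ``a supremum of affine functionals is convex''), and both dispatch the total laminated curl and the hardening by the same slicewise/integration observation. No gaps.
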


\begin{proof}
Choose $m_j\in\mathbb{R}^3$ and $s, t\in BV(\Omega_x,m_j^{\perp})$. Set $r = \lambda s + (1-\lambda)t$. Then, by linearity, and with (hopefully) obvious notation,
\bq
|\curl (r\otimes m_j)|_{\mathrm{lam}} & = & \sup_{|\phi|\leq 1}\int_{\Omega_1} (\lambda c_1^s + (1-\lambda)c_1^t)\dv\phi + \sup_{|\psi|\leq1}\int_{\Omega_1} (\lambda c_2^s + (1-\lambda)c_2^t)\dv\psi\\
 & \leq & \lambda\left(\sup_{|\phi|\leq 1}\int_{\Omega_1} c_1^s\dv\phi + \sup_{|\psi|\leq 1}\int_{\Omega_1} c_2^s\dv\psi\right)\nonumber\\
 & & +~(1-\lambda)\left(\sup_{|\phi|\leq 1}\int_{\Omega_1} c_1^t\dv\phi + \sup_{|\psi|\leq 1}\int_{\Omega_1} c_2^t\dv\psi\right)\label{tri}\\
 & = & \lambda|\curl (s\otimes m_j)|_{\mathrm{lam}} + (1-\lambda)|\curl (t\otimes m_j)|_{\mathrm{lam}},
\eq
where we used the triangle inequality for the total variation to get (\ref{tri}).

The two last assertions of the proposition are, once again, trivial.
\end{proof}

\subsection{The displacement}
We now construct a zig-zag perturbation to the displacement $u$ which accommodates the laminated plastic distortion, for a given $\beta = s\otimes m$ and decomposition $s = \sum_{i=1}^2 c_ib_i$, with negligible addition of elastic energy. This perturbation, denoted by $\hat{u}_n$, is constructed as follows. On a given bi-layer of the $\beta_n$-laminate, we set $\hat{u}_n = 0$ on the bottom boundary, then on the $b_j$-slice we set
\be
\frac{\p\hat{u}_n}{\p x^1} = s - 2c_jb_j,\label{2_slice}
\ee
where the right-hand side of (\ref{2_slice}) is evaluated on the centre-plane of the bi-layer. By construction, this implies that $\hat{u}_n = 0$ on top of the bi-layer, and so this procedure can be carried out consistently on the whole slip patch $\Omega_1$. Also, by the smoothness of $\beta$, both $\hat{u}_n$ and $\nabla_{y,z}\hat{u}_n$ are $\mathcal{O}\left(\frac{1}{2^n}\right)$, uniformly on $\Omega_1$. Thus, by defining $u_n = u + \hat{u}_n$ and $\hat{\beta}_n = \beta_n - \beta$, we get
\be\label{disp}
\nabla\hat{u}_n  =  \hat{\beta}_n + \mathcal{O}\left(\frac{1}{2^n}\right),
\ee
and hence that the the linearised elastic energy of $(u_n,\beta_n)$ converges to that of $(u,\beta)$ as $n\ra\infty$, whereby $u_n\in H^1$ and $\beta_n\in L^{\infty}$ for all $n$.

The above applies to $(u,\beta)$ away from $\p\Omega_1$. Since we are assuming $\supp\beta$ to be Lipschitz and compactly included in $\Omega_1$, we can, near $\p\Omega_1$, make a small perturbation to $\hat{u}_n$ to ensure that the construction can be carried out on the whole of $\Omega$ while $u_n$ still satisfies the original Dirichlet condition. In particular, and in a completely standard way, for each $n$ and each laminate bi-layer, we clip off the bi-layer in the region $\Omega_1\setminus\supp\beta$ in order to make a thin cylinder which is compactly included in $\Omega_1$ and $o(1)$-distant from $\p\Omega_1$, which can be done if $n$ is large enough. Then, for each value of $x^1$ on $\Omega_1$ we linearly interpolate $\hat{u}_n$ down to zero over a distance of magnitude $o(1)$, such that $\hat{u}_n$ has a long tent shape on each bi-layer, and such that $\nabla\hat{u}_n=\mathcal{O}(1)$ outside $\supp\beta$. Clearly, the perturbation to the elastic energy due to this boundary correction is $o(1)$, and so 
the elastic energy of the laminate still converges to that of $(u,\beta)$ as $n\ra\infty$.

\subsection{The relaxed energy}
From the above considerations, we see that the correct expression for the relaxed energy, which makes sense for non-smooth $\beta$, and, for example, $u\in H^1$, is just 
\be
E_{\mathrm{rel}}(u,\beta) = \left\{
\begin{array}{ccl}
\int_{\Omega} | (\nabla u - \beta)_\mathrm{sym}|^2\dx + \sigma\|\curl_{\mathcal{M}}\beta\|_{\mathrm{lam}} + \tau\|\beta\|_{1-\mathrm{lam}} & : & \textbf{(RSC)}~\textrm{holds},\\
 & & \\
+\infty & : & \textrm{otherwise},
\label{rel_energy}
\end{array}
\right.
\ee
where $\|\curl_\mathcal{M}\beta\|_{\mathrm{lam}}$ and $\|\beta\|_{1-\mathrm{lam}}$ denote the laminated curl and laminated hardening, such that, by analogy with $\|\curl_\mathcal{M}\beta\|$ , $\|\curl_\mathcal{M}\beta\|_{\mathrm{lam}}$ is obtained by summing the laminated curls over all possible slip planes.

Given this, we can summarise the results of our lamination procedure as follows.
\begin{proposition}
\label{prop:rel_energy}
For any pair $(u,\beta)$, $u\in H^1$, $\beta$ smooth, on a Lipschitz domain $\Omega$, such that $\beta = s\otimes m_j$ ($m_j\in\mathcal{M}$) on $\Omega_j$ and $\supp s|_{\overline{\Omega}_j}\subset\subset(\Omega_j\cup(\p\Omega\cap\p\Omega_j))$ is Lipschitz, there exists a sequence $(u_n,\beta_n)$ such that $u_n\in H^1$, $u_n = u$ on $\p\Omega$, $\beta_n\in L^{\infty}$ satisfies the non-relaxed side condition \textbf{(SSC)}, and
\be
E(u_n,\beta_n) \ra E_{\mathrm{rel}}(u,\beta).
\ee
Moreover, the relaxed energy (\ref{rel_energy}) is convex on each $\Omega_j$, as a function of $s$.
\end{proposition}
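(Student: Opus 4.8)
The plan is to package the constructions of Section~\ref{sec:2} into a single statement, working one slip patch at a time and then summing. Fix a patch $\Omega_k$ on which $\beta = s\otimes m_k$; by hypothesis $\supp(s|_{\overline{\Omega}_k})$ is a Lipschitz set compactly contained in $\Omega_k\cup(\p\Omega\cap\p\Omega_k)$, so $s$ --- and hence $\beta$ --- vanishes near every interior interface $\p\Omega_k\setminus\p\Omega$. Choosing coordinates with $e_1 = m_k$, I write $s = c_1 b_1 + c_2 b_2$ with $c_i\in C^\infty$ and let $\beta_n$ be the bi-layer laminate~\eqref{slice} of layer thickness $2^{-n}$: on each $b_i$-slice $\beta_n = 2c_i\,b_i\otimes m_k$ is a scalar multiple of a single Burgers vector, so $\beta_n$ obeys \textbf{(SSC)}, and $\beta_n\in L^\infty$ since the $c_i$ are bounded on the compact set $\supp\beta$ (and $\beta_n = 0$ off it). I then take the zig-zag correction $\hat{u}_n$ of~\eqref{2_slice}; as observed after~\eqref{2_slice}, $\hat{u}_n$ vanishes on the top and bottom of every bi-layer, is globally well defined on $\Omega_k$ with $\hat{u}_n,\nabla_{y,z}\hat{u}_n = \mathcal{O}(2^{-n})$, and vanishes wherever $s=0$, in particular near the interior interfaces. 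Near $\p\Omega$, where $s$ may be nonzero, the clipping-and-tent modification of Section~\ref{sec:2}, applied over an $o(1)$-thick collar, forces $\hat{u}_n$ to zero on $\p\Omega$ at the cost of $\nabla\hat{u}_n = \mathcal{O}(1)$ on a set of measure $o(1)$. Carrying this out on every patch and setting $u_n = u + \sum_k\hat{u}_n^{(k)}$, with $\beta_n$ the corresponding union of laminates, gives $u_n\in H^1$ with $u_n = u$ on $\p\Omega$ and $\beta_n\in L^\infty$ satisfying \textbf{(SSC)}.

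It then remains to pass to the limit in the three terms of $E$. By~\eqref{disp}, $\nabla u_n - \beta_n = (\nabla u - \beta) + \mathcal{O}(2^{-n})$ outside the collars and $(\nabla u - \beta) + \mathcal{O}(1)$ inside them, so dominated convergence gives $\int_\Omega|(\nabla u_n - \beta_n)_{\mathrm{sym}}|^2\,\dx \ra \int_\Omega|(\nabla u - \beta)_{\mathrm{sym}}|^2\,\dx$. Since $\beta_n = s_n\otimes m_k$ with $s_n\in m_k^{\perp}$ throughout $\Omega_k$, the characteristic function in~\eqref{TV} is identically $1$ there and $|\curl_{m_k}\beta_n|$ equals the planar total variation $|D_{y,z}\beta_n|$; hence~\eqref{lam_curl} yields $\|\curl_{\mathcal{M}}\beta_n\|_\Omega \ra \sum_k\sum_{i=1,2}\int\|D_{y,z}c_i^{(k)}\|_{L^1_t}\,\dx^1 = \|\curl_{\mathcal{M}}\beta\|_{\mathrm{lam}}$, the collar modification contributing only $o(1)$ since $\beta_n$ is unchanged ($=0$) off $\supp\beta$ there and so no extra planar variation is created. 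Finally~\eqref{hard} gives $\int_\Omega|\beta_n|\,\dx \ra \sum_k\sum_{i=1,2}\int_\Omega|c_i^{(k)}|\,\dx = \|\beta\|_{1-\mathrm{lam}}$. As $(u,\beta)$ satisfies \textbf{(RSC)} by assumption, summing the three limits yields $E(u_n,\beta_n)\ra E_{\mathrm{rel}}(u,\beta)$.

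For the convexity assertion, fix $u$ and a patch $\Omega_k$ with slip normal $m_k$, and view $\beta = s\otimes m_k$ on $\Omega_k$ as a function of $s$. The elastic term $\int_{\Omega_k}|(\nabla u - s\otimes m_k)_{\mathrm{sym}}|^2\,\dx$ is a squared $L^2$-norm precomposed with the affine map $s\mapsto(\nabla u - s\otimes m_k)_{\mathrm{sym}}$, hence convex in $s$; the maps $s\mapsto\|\curl_{m_k}(s\otimes m_k)\|_{\mathrm{lam}}$ and $s\mapsto\|s\otimes m_k\|_{1-\mathrm{lam}}$ are convex as established above. Since $\sigma,\tau\geq 0$, the sum $E_{\mathrm{rel}}(u,\,\cdot\,)$ is convex on $\Omega_k$.

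The three limits above are essentially~\eqref{disp},~\eqref{lam_curl} and~\eqref{hard}, and the convexity reduces to the two earlier propositions, so I expect the only point requiring real care to be the global assembly of the displacement correction --- simultaneously enforcing the Dirichlet condition on all of $\p\Omega$, preserving \textbf{(SSC)}, and keeping the elastic-energy perturbation $o(1)$ --- but, because the support hypothesis keeps the slip bounded away from the interior interfaces, this is precisely the single-patch construction of Section~\ref{sec:2} carried out patch by patch, so no new idea is needed.
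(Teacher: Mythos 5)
Your proposal is correct and follows essentially the same route as the paper, which states Proposition~\ref{prop:rel_energy} precisely as a summary of the Section~\ref{sec:2} constructions: the bi-layer laminate \eqref{slice}, the zig-zag correction \eqref{2_slice} with the clipping-and-tent boundary fix, the three limits \eqref{disp}, \eqref{lam_curl}, \eqref{hard}, and convexity of the laminated curl and hardening from the earlier propositions. Your patch-by-patch assembly using the support hypothesis to decouple interior interfaces, and the observation that the elastic term is convex as a squared norm of an affine map of $s$, are exactly the intended argument.
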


We now wish to show that a larger class of (possibly non-smooth) pairs $(u,\beta)$ for which the relaxed energy (\ref{rel_energy}) is finite can be approximated by smooth functions which respect (RSC) and the boundary conditions, without increasing the curl.
 
\section{Smoothing}\label{sec:3}

In order to perform the lamination of Section \ref{sec:2}, we must first smooth (the Burgers components of) a given, relaxed $\beta$ without increasing the relaxed energy (\ref{rel_energy}), which is a little involved, due to the relaxed side condition on $\beta$, as noted in the introduction.

Our basic procedure on a given slip-patch $\Omega_1$ is to mollify the $c_i$ once, thereby perhaps violating the relaxed slip condition, but reducing the curl, then to cut off the resulting functions on a strictly interior approximation to $\Omega_1$, such that the curl barely increases, and finally to apply a standard mollifier, which re-establishes the relaxed slip condition if the convolution kernel is chosen fine enough, and which once more decreases the curl.

First of all, it is no loss of generality to assume boundedness of the $c_i$, which will be useful in the sequel. More explicitly:

\begin{proposition}\label{prop:bounded}
Suppose we have a domain $\Omega_1\subset\Omega$ and scalar functions $f$ and $g$ on $\Omega_1$ such that $gf\in L^p(\Omega_1)$ for some $p\in[1,\infty)$, $f=0$ on $\Omega\setminus\Omega_1$ and $\|\curl f\|(\Omega)<\infty$. Then $\exists$ a sequence $f^n\in L^{\infty}(\Omega)$ such that $f=0$ on $\Omega\setminus\Omega_1$, $gf^n\ra gf\in L^p(\Omega)$ as $n\ra\infty$, and $\|\curl f_n\|(\Omega)\leq\|\curl f\|(\Omega)$, $\forall n\in\mathbb{N}$.\label{bounded}
\end{proposition}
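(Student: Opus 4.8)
The plan is to construct $f^n$ by truncation at height $n$ and show that each of the three required properties -- the support condition, the $L^p$-convergence of $gf^n$, and the non-increase of the planar curl -- follows from standard facts about $BV$ functions, applied slice by slice in the $x^1$-direction. Concretely, I would set $f^n = (f\wedge n)\vee(-n)$, i.e. the pointwise truncation of $f$ to the interval $[-n,n]$.

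First I would dispose of the support condition: since $f=0$ on $\Omega\setminus\Omega_1$ and truncation maps $0$ to $0$, we trivially have $f^n=0$ on $\Omega\setminus\Omega_1$. Next, for the $L^p$-convergence, observe that $|gf^n - gf| = |g|\,|f^n-f| \le |g|\,|f|\cdot\chi_{\{|f|>n\}}$ pointwise, and $|g|\,|f|\,\chi_{\{|f|>n\}} \to 0$ a.e. as $n\to\infty$, dominated by $|gf|\in L^p(\Omega_1)$; hence $gf^n\to gf$ in $L^p(\Omega)$ by dominated convergence. The substantive point is the curl bound. Here I would use the fact, stated in the excerpt, that $\|\curl f\|(\Omega)<\infty$ means precisely that $f(t,\cdot)\in BV(\Omega_t)$ for a.e.\ $x^1=t$, with $\int |D_{y,z}f(t,\cdot)|\,\mathrm{d}t<\infty$. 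For a fixed such slice, truncation is $1$-Lipschitz as a map $\R\to\R$ with the truncating function having derivative bounded by $1$, so by the chain rule / lower-semicontinuity properties of total variation for compositions of $BV$ functions with Lipschitz maps, $|D_{y,z}f^n(t,\cdot)|(\Omega_t) \le |D_{y,z}f(t,\cdot)|(\Omega_t)$. Integrating this inequality over $t\in I_{\Omega_1}$ gives $\|\curl f^n\|(\Omega)\le\|\curl f\|(\Omega)$, and in particular $f^n$ is admissible (its curl is finite). Since $|f^n|\le n$, clearly $f^n\in L^\infty(\Omega)$; boundedness of $gf^n$ in $L^p$ then also follows (or is immediate from convergence).

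I expect the main obstacle to be purely a matter of care rather than depth: one must make sure the slicing characterisation of $\|\curl f\|$ from \eqref{curl}--\eqref{TV} is genuinely equivalent to slicewise $BV$ plus integrability (this is the standard Fubini-type theorem for $BV$ via one-dimensional sections, here in the two transverse variables), and that the truncation inequality for total variation -- which for smooth functions is the elementary $|\nabla(T_n\circ f)|\le|\nabla f|$ -- extends to $BV$ functions on the Lipschitz domain $\Omega_t$. Both are classical (see e.g.\ Ambrosio--Fusco--Pallara), so the proof is short; the only genuinely new ingredient is recognising that all three conclusions localise to the transverse slices, where everything reduces to one- (really two-) dimensional $BV$ theory. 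I would remark, finally, that the same truncation argument applies verbatim to the laminated curl $|\curl_{m_j}\beta|_{\mathrm{lam}}$, since by Proposition~2.? that quantity is, slicewise, a sum of total variations of the Burgers components $c_i$, and each $c_i$ is a fixed linear combination of the $s_k$; so truncating $\beta$ truncates the $c_i$ up to the fixed invertible linear change of coordinates, and the needed estimate again follows from the $BV$-truncation inequality applied componentwise.
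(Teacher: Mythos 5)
Your proposal is correct and follows essentially the same route as the paper: truncate $f$ at height $\pm n$, note that the support condition is preserved, obtain $L^p$-convergence of $gf^n$ from a standard convergence theorem, and verify slice by slice that truncation does not increase the planar total variation before integrating in the $x^1$-direction. The only (immaterial) differences are that the paper justifies the slicewise inequality $|D_{y,z}f^n|(\Omega_c)\leq|D_{y,z}f|(\Omega_c)$ via the co-area formula applied to the superlevel sets of the truncated function rather than via the $1$-Lipschitz chain rule for $BV$, and that it uses monotone convergence of the norms plus pointwise convergence instead of your (arguably cleaner) domination of $|gf^n-gf|$ by $|gf|$.
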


\begin{proof}
First define
\be
\bar{f}^n = 
\left\{
\begin{array}{ccc}
f & : & f<n\\
n   & : & f\geq n
\end{array}
\right..
\ee

Then $(g\bar{f}^n)^p\nearrow (gf)^p$ pointwise, and therefore monotone convergence implies $\|g\bar{f}^n\|_{L^p}\ra\|gf\|_{L^p}$ as $n\ra\infty$.

Let $\Omega_c = \Omega\cap\{x^1 = c\}$. By the co-area formula applied to $F_{t,c}^n = \{x^i\in\Omega_c:\bar{f}^n(x^i)>t\}$, we get
\bq
|D_{y,z}\bar{f}^n|(\Omega_c) & = & \int_{-\infty}^{\infty}\|\partial F_{t,c}^n\|(\Omega_c)~dt\\
 & = & \int_{-\infty}^n\|\partial F_{n,c}\|(\Omega_c)~dt\\
 & \leq & |D_{y,z}f|(\Omega_c).
\eq
Applying the same argument to
\be
f^n = 
\left\{
\begin{array}{ccc}
\bar{f}^n & : & \bar{f}^n>-n\\
-n   & : & \bar{f}^n\leq -n
\end{array}
\right.,
\ee
we arrive at
\be
|D_{y,z}f^n|(\Omega_c)\leq |D_{y,z}f|(\Omega_c),
\ee
and, by integrating in the $x^1$-direction, $\|\curl f^n\|(\Omega)\leq\|\curl f\|(\Omega)$. Clearly, we also have $gf_n\ra gf$, $(gf^n)^p\nearrow (gf)^p$, pointwise, and therefore by monotone convergence, $\|gf^n\|_{L^p}\ra\|gf\|_{L^p}$. Together with, say, Proposition 1.3.3 of \cite{amb}, we also get $gf_n\ra gf\in L^p$ as $n\ra\infty$, as required.
\end{proof}

We now show that a bounded $\beta$ supported on a slip-patch $\Omega_1\subset\subset\Omega$ can be mollified without enlarging the support, and such that the laminated curl essentially decreases, provided $\Omega_1$ satisfies an additional boundary-regularity condition. This condition states that $\Omega_1$ should have finite perimeter in $\Omega$, and that the perimeter should be equal to the area of the topological boundary: $\mathcal{H}^2(\p\Omega_1) = P(\Omega_1)$, which is equivalent to the statement $\mathcal{H}^2(\p\Omega_1\setminus\mathcal{F}\Omega_1)=0$, where $\mathcal{F}\Omega_1$ is the reduced boundary \cite{amb}, and which very roughly means that $\Omega_1$ has no cuts. The same condition was employed by Schmidt \cite{schmidt} in order to obtain strictly-interior approximations of BV domains and functions. Our approach to obtaining the desired cut-off is somewhat different to  (and less technical than) that of Schmidt, and only works in the scalar case, which is enough for us since the 
components of $\beta$ ($c_i$ in (\ref{c})) can be mollified separately when considering the relaxed energy. The advantage of our method is that we require less regularity on $\beta$ than does Schmidt, namely just $\beta\in L^p(\Omega)$, rather than $\beta\in BV\cap L^{\infty}(\Omega)$. The case where $\p\Omega_1$ intersects the sample boundary, $\p\Omega$, will be considered separately towards the end of this section.

\begin{proposition}
Let $\Omega_1\subset\subset\Omega\subset\mathbb{R}^3$ be a bounded domain with finite perimeter satisfying $\mathcal{H}^2(\p\Omega_1\setminus\mathcal{F}\Omega_1)=0$, and let $f\in L^p(\Omega)$ with $\mathrm{supp}f\subset\Omega_1$ be such that $\int|D_{y,z}f|~\mathrm{d}x^1<\infty$. Then there exists a family $f_{\ep}\in C_0^{\infty}(\Omega_1)$, $\ep>0$, such that
\be 
f_{\ep}\ra f\in L^p(\Omega_1)\quad\mathrm{as}\quad\ep\ra 0 \label{f1}
\ee
and
\be
\int_{\Omega_1}|D_{y,z}f_{\ep}|~\mathrm{d}x^1\leq\int_{\Omega_1}|D_{y,z}f|~\mathrm{d}x^1 + \ep \label{f2}.
\ee\label{f_prop}
\end{proposition}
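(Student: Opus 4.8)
The plan is to cut $f$ off to a strictly interior piece of the slip patch and then mollify, controlling the planar variation $\int_{\Omega_1}|D_{y,z}\,\cdot\,|\,\mathrm{d}x^1$ by means of the co-area formula at every stage. It is convenient to write, for measurable $E\subseteq\Omega$, $\mathcal{P}(E):=\int|D_{y,z}\chi_E|\,\mathrm{d}x^1$, which equals $\int_{\mathbb{R}}\mathcal{H}^1\big(\partial^{*}(E\cap\{x^1=c\})\big)\,\mathrm{d}c$ and is finite for $E=\Omega_1$, being bounded by the perimeter of $\Omega_1$ in $\Omega$. First I would reduce to $f$ bounded (free by Proposition~\ref{prop:bounded} with $g\equiv1$, plus a diagonal argument) and then to $f\ge0$ (split $f=f^{+}-f^{-}$ and use that on a.e.\ slice the total variations of $f^{+}$ and $f^{-}$ sum to that of $f$, giving each part a budget $\ep/2$). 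For such $f$, applying the two-dimensional co-area formula on each slice and integrating in $x^1$ gives
\[
\int_{\Omega_1}|D_{y,z}f|\,\mathrm{d}x^1=\int_0^{\infty}\mathcal{P}\big(\{f>t\}\big)\,\mathrm{d}t ,
\]
and the same for any bounded, nonnegative, BV-on-slices replacement of $f$; this identity lets me reduce the estimate to one about superlevel sets.

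The crux is the construction of a strictly interior approximation of the slip patch: open sets $\Omega_1^{k}$ with $\overline{\Omega_1^{k}}\subset\subset\Omega_1$, $|\Omega_1\setminus\Omega_1^{k}|\to0$, and $\mathcal{P}(\Omega_1^{k})\to\mathcal{P}(\Omega_1)$ as $k\to\infty$. This is exactly where the hypothesis $\mathcal{H}^2(\partial\Omega_1\setminus\mathcal{F}\Omega_1)=0$ is used: if $\Omega_1$ possessed a ``cut'' one could not shrink it without creating extra planar perimeter, and the conclusion of the proposition would then fail. I would obtain the $\Omega_1^{k}$ either by invoking the result of Schmidt~\cite{schmidt}, or --- exploiting the scalar setting, which is all we need --- more cheaply: mollify $\chi_{\Omega_1}$ at scale $1/k$ and take a superlevel set $\{\chi_{\Omega_1}\ast\eta_{1/k}>\lambda_k\}$, the level $\lambda_k$ chosen by the co-area formula in $\lambda$ so that the planar perimeter of this set is within $1/k$ of $\mathcal{P}(\Omega_1)$; the no-cuts condition guarantees that, for a suitable $\lambda_k$, its closure is compactly contained in $\Omega_1$. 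Some care with Fubini is required here, since slicing does not commute with the inner-parallel-set operation, and I expect this step to be the main obstacle.

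Granting the $\Omega_1^{k}$, set $f^{k}:=f\,\chi_{\Omega_1^{k}}$. Then $\supp f^{k}\subseteq\overline{\Omega_1^{k}}\subset\subset\Omega_1$, and $f^{k}\to f$ in $L^{p}(\Omega_1)$ since the two differ only on $\Omega_1\setminus\Omega_1^{k}$, whose measure vanishes. For the planar variation, observe that $\{f^{k}>t\}=\{f>t\}\cap\Omega_1^{k}$ for $t>0$, so by co-area $\int_{\Omega_1}|D_{y,z}f^{k}|\,\mathrm{d}x^1=\int_0^{\infty}\mathcal{P}\big(\{f>t\}\cap\Omega_1^{k}\big)\,\mathrm{d}t$. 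Now the submodularity $\mathcal{P}(A\cap B)+\mathcal{P}(A\cup B)\le\mathcal{P}(A)+\mathcal{P}(B)$ (valid slicewise, hence for $\mathcal{P}$), the lower semicontinuity of $\mathcal{P}$ with respect to $L^{1}(\Omega)$-convergence, and $\mathcal{P}(\Omega_1^{k})\to\mathcal{P}(\Omega_1)$ together yield, for a.e.\ fixed $t>0$, $\mathcal{P}(\{f>t\}\cap\Omega_1^{k})\to\mathcal{P}(\{f>t\})$; since this integrand is supported in $t\in(0,\|f\|_{\infty}]$ and dominated there by $\mathcal{P}(\{f>t\})+\sup_k\mathcal{P}(\Omega_1^{k})\in L^{1}$, dominated convergence gives $\int_{\Omega_1}|D_{y,z}f^{k}|\,\mathrm{d}x^1\to\int_{\Omega_1}|D_{y,z}f|\,\mathrm{d}x^1$.

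Finally, since $\gamma_k:=\mathrm{dist}(\overline{\Omega_1^{k}},\partial\Omega_1)>0$, mollifying $f^{k}$ at any scale below $\gamma_k$ produces a function in $C_0^{\infty}(\Omega_1)$ that converges to $f^{k}$ in $L^{p}$ as the scale tends to $0$ and whose planar variation is at most that of $f^{k}$ (mollification commutes with $D_{y,z}$ and, by Fubini, cannot increase $\int|D_{y,z}\,\cdot\,|\,\mathrm{d}x^1$). A diagonal choice of $k\to\infty$ together with a mollification scale $\to0$ then produces a family $f_{\ep}$ satisfying~\eqref{f1} and~\eqref{f2}. Everything downstream of the interior-approximation step is routine; the genuinely delicate point, and the place where the ``no-cuts'' hypothesis on $\Omega_1$ is indispensable, is the construction of the $\Omega_1^{k}$ and the interplay there between slicing and perimeter convergence.
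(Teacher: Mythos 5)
Your overall architecture is coherent and genuinely different from the paper's. You cut off first (sharply, by multiplying with $\chi_{\Omega_1^k}$ for a strictly interior $\Omega_1^k$) and control the variation created by the cut via submodularity of the sliced perimeter together with $\mathcal{P}(\Omega_1^k)\to\mathcal{P}(\Omega_1)$; the paper instead mollifies first, so that when it cuts off along the inner faces of explicitly constructed boundary cylinders, the trace of the (now smooth) function on the cut surface is dominated by the planar variation it already carries inside the removed cylinders (inequality (\ref{cut_curl})), and then it mollifies again. Your reductions to bounded and nonnegative $f$, the coarea/submodularity/lower-semicontinuity computation, and the final mollification step are all correct as stated; if completed, your route gives a cleaner separation between the measure-theoretic work (interior approximation of the set) and the function-level estimates.

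The gap is exactly where you locate it, and it is genuine. Your ``cheap'' construction of $\Omega_1^k$ does not work: the superlevel set $\{\chi_{\Omega_1}\ast\eta_{1/k}>\lambda\}$ is in general not contained in $\Omega_1$ at all, let alone compactly --- the mollification is positive on a $1/k$-neighbourhood of $\Omega_1$, and any point of $\p\Omega_1$ of Lebesgue density one lies in every superlevel set with $\lambda<1$ once $k$ is large (the no-cuts hypothesis only makes the set of such boundary points $\mathcal{H}^2$-null, not empty; consider $\Omega_1=B_1\setminus\{0\}$). Choosing $\lambda_k$ by the coarea formula controls $\mathcal{P}$ but does nothing for containment. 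The Schmidt route does work, but it hands you convergence of the full isotropic perimeter $P(\Omega_1^k)\to P(\Omega_1)$, whereas your submodularity step needs the anisotropic quantity $\mathcal{P}(E)=\int_{\mathcal{F}E}\sqrt{1-\nu_1^2}\,\mathrm{d}\mathcal{H}^2$ to converge; this implication is in fact true (strict convergence of $D\chi_{\Omega_1^k}$ plus Reshetnyak's continuity theorem, the integrand $\nu\mapsto\sqrt{1-\nu_1^2}$ being continuous on the sphere), but you flag it as an unresolved obstacle rather than closing it. Until one of these two routes is actually carried out, the proof is incomplete precisely at the step where the paper does its real work, namely the hands-on covering of $\mathcal{F}\Omega_1$ by almost-flat compact pieces and cylinders furnished by the structure theorem.
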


\begin{proof}
By Proposition \ref{bounded}, we may assume w.l.o.g. that $f$ is bounded.

The first stage of the proof is to construct a strictly interior approximation to $\Omega_1$. To do this, we employ the Structure Theorem for sets of finite perimeter \cite{evgar}. Thus, since $\mathcal{F}\Omega_1$ is $\mathcal{H}^2$-almost all of $\p\Omega_1$, we have that $\forall\ep>0$ there exist $M(\ep)\in\mathbb{N}$ and compact, disjoint sets $K_k^{\ep}\subset\p\Omega_1$, $k=1,2,\ldots M$, such that each $K_k^{\ep}$ is a subset of a $C^1$-surface, $\nu_{\Omega_1}|_{K_k}$ is normal to $K_k^{\ep}$, and
\be
\p\Omega_1 = \left(\bigcup_{k=1}^{M(\ep)} K_k^{\ep}\right)\sqcup N,\quad\mathrm{where}\quad\mathcal{H}^2(N)\leq\ep,\label{N}
\ee
for some $\mathcal{H}^2$-measurable set $N\subset\p\Omega_1$, and where $\nu_{\Omega_1}|_{K_k}$ is the generalised outward unit normal to $\p\Omega_1$.

By compactness, the $K_k^{\ep}$ are pairwise separated by a distance $d(\ep)>0$, and we may assume that $K_k^{\ep} = \phi_k(\overline{U}_k^{\ep})$, where $U_k^{\ep}\subset U_k$ is an open coordinate patch in $\mathbb{R}^2$ (as is $U_k$), $\phi_k$ is a $C^1$-function and the overbar denotes topological closure. 

Now, with $\ep>0$ fixed, for each $k=1,2,\ldots M$, cover $U_k^{\ep}$ with an $\hep$-coordinate mesh, where $\hep>0$ is small, and remains to be determined. Let $S_{ij}^k$ be the open mesh squares, with centres denoted by $y^c_{ij}$, and define the slightly enlarged open squares $\widehat{S}_{ij}^k = (1 + \hep)S_{ij}^k$ by scaling about the $y^c_{ij}$ (here, $i$ and $j$ index the mesh lines in the $y^1$ and $y^2$ coordinate directions). Thus, the $\phi_k(\widehat{S}_{ij}^k\cap U^{\ep}_k)$ comprise a relatively open cover of $K_k^{\ep}$.

By compactness of the $U_k^{\ep}$ and the fact that $\phi_k$ is $C^1$, we have that for every $\delta>0$ there exists an $\hep(\delta, M)$ such that, for $y, \bar{y}\in U_k^{\ep}$,
\be
|y-\bar{y}|<2\hep(\delta, M)\quad\Rightarrow\quad |\nabla\phi_k(y)-\nabla\phi_k(\bar{y})| < \delta.\label{C1}
\ee
For fixed $\ep$ and $\delta$, we will henceforth choose the $\hep$-mesh on the $U^{\ep}_k$ so fine that (\ref{C1}) holds.

Now, denoting by $\mathrm{Tan}(S,x)$ the tangent plane of a manifold $S$ at the point $x$, for each $i, j, k$, we take the orthogonal projection of $\phi_k(\widehat{S}_{ij}^k)$ onto $\mathrm{Tan}(\widehat{K}_k^{\ep},x^c_{ij})$, where $x^c_{ij} = \phi_k(y^c_{ij})$, and call the result $P^k_{ij}$, then construct the cylinder
\be 
C^k_{ij} = P^k_{ij}\times l^k_{ij},\quad\mathrm{where}\quad l^k_{ij} = \{x^c_{ij}\pm t\hep\delta\nu_{x^c_{ij}} : t\in [0,1)\},
\ee
which contains $\phi_k(\widehat{S}_{ij}^k)$ if $\hep(\delta, M)$ is chosen as above, and is such that the outer face, defined by
\be
O^k_{ij} = P^k_{ij}\times\{x^c_{ij} + \hep\delta\nu_{x^c_{ij}}\},
\ee 
does not intersect $\Omega_1$, which can always be arranged if $\delta$ is small enough, by compactness. Also, if $\delta$ is small enough, then any given $C^k_{ij}$ intersects only those other $C^k_{pq}$ such that $S^k_{pq}$ is one of the (at most eight) immediate neighbours of $S_{ij}^k$.

Next, by the definition of Hausdorff measure, we may cover the $\mathcal{H}^2$-small set $N$ with countably many balls, $B_{\rho_i}(q_i)$ with $\rho_i\leq 1$, such that $\sum_{i=1}^{\infty}\rho_i^2\leq 2\ep$. Moreover, by compactness and (\ref{N}), we can cover $\p\Omega_1$ with the $C^k_{ij}$ and a finite sub-collection of the $B_{\rho_i}(q_i)$, say $B_{\rho_{m_i}}(q_{m_i})$ for $i = 1,2,\ldots M_1$, so that, defining 
\be
\Omega_{\ep} = \Omega_1\setminus\left(\left(\bigcup_{i = 1}^{M_1} B_{\rho_i}(q_i)\right)\bigcup\left(\bigcup_{ijk}C^k_{ij}\right)\right),
\ee
we see that 
\be
\Omega_{\ep}\subset\subset\Omega_1\quad \mathrm{and}\quad |\Omega_1\setminus\Omega_{\ep}| = \mathcal{O}(\ep + \hep\delta).
\ee
For each $C^k_{ij}$ we now subtract the shadow of its (thin) lateral portion, along with the shadow of the intersection with its (at most eight) neighbouring $C^r_{pq}$ and the $B_{\rho_l}(q_l)$, the shadow being taken with respect to the projection of $\nu_{x^c_{ij}}$ onto the $(x^2,x^3)$-plane. Denote by $\widehat{C}^k_{ij}$ the resulting disjoint sets, and observe that the sections $\widehat{C}^k_{ij}\cap\{x^1=\mathrm{const.}\}$ are necessarily rectangular, by construction. 

It follows from the $C^1$-property of the $\phi_k$ and (\ref{C1}) that the amount of projected area lost from $\p\Omega_{\ep}$ by removing shadows in this way is bounded by $C_{M(\ep)}(\hep(\delta, M(\ep)) + \delta) + 2\ep$ for some $C_{M(\ep)}$, and this can be made less than $3\ep$ for a given $\ep$, if $\delta$ and then $\hep$ are made sufficiently small. 

Next, suppose that we smooth $f$ with a Friedrichs mollifier so fine that the resulting function $\tilde{f}$ has support contained in $\Omega_1\cup(\cup_{ijk}C^k_{ij})\cup(\cup_i B_{\rho_{m_i}}(q_{m_i}))$. By a standard result on the mollification of measures, namely Thm.2.2(b) of \cite{amb}, this reduces the curl, and $\tilde{f}$ can be taken arbitrarily close to $f$ in $L^p$, by standard convolution theory. Also, by what we just said above, along with Young's inequality, if we cut off $\tilde{f}$ on $\p\Omega_{\ep}$, then the amount of curl on $\p\Omega_{\ep}$ which goes unaccounted for by neglecting shadows is bounded by $3\ep\|f\|_{\infty}$, and it remains to estimate the curl generated on the flat faces $\widehat{C}^k_{ij}\cap\p\Omega_{\ep}$.

Thus, on each $\widehat{C}^k_{ij}$ we consider the horizontal slices $x^1=t$, which, as we already noted, are rectangles. Since $\tilde{f}$ is a smooth scalar function and $\tilde{f} = 0$ on the outer edge of $\widehat{C}^k_{ij}\cap\{x^1=t\}$, we have
\be
\sum_{ijk}\int~\mathrm{d}t \int|D_{y,z}\tilde{f}|(\{x^1=t\}\cap\widehat{C}^k_{ij})\geq \sum_{ijk}\int~\mathrm{d}t\int_{\p\Omega_{\ep}\cap\widehat{C}^k_{ij}\cap\{x^1=t\}}|\tilde{f}|\mathrm{d}l\label{cut_curl},
\ee
where $\mathrm{d}l$ is the Euclidean line element.

The curl generated by cutting off $\tilde{f}$ on $\p\Omega_{\ep}$ is just the right-hand side of (\ref{cut_curl}) plus a contribution coming from the parts of $\p\Omega_{\ep}$ lost upon subtracting shadows, which we know is bounded by $3\ep\|f\|_{\infty}$ if $\hep$ and $\delta$ are chosen small enough. Thus, for appropriate $\hep$ and $\delta$, we may perform the cut-off and then (Friedrichs) mollify once more to obtain a smooth $f_{\ep}$ with smooth support which satisfies (\ref{f1}) and (\ref{f2}), modulo a relabelling of $\ep$.
\end{proof}

Proposition \ref{f_prop} can also be used to smooth $\beta$ on slip patches which intersect the sample boundary, $\p\Omega$. Here, we must take care not to introduce spurious curl when mollifying the slip near $\p\Omega$: a key tool in this connection is standard extension theory for BV-functions \cite{amb}.
\begin{proposition}
Suppose the sample domain $\Omega$ is Lipschitz, and that we have a slip patch $\Omega_1\subset\Omega$ satisfying the regularity condition $\mathcal{H}^2(\p\Omega_1\setminus\mathcal{F}\Omega_1)=0$. Let $f\in L^p(\Omega)$, with $\supp f\subset\Omega_1$, be such that $\|\curl f\|_{\Omega}<\infty$. Then there exists a family $f_{\ep}\in C^{\infty}(\Omega)$, $\ep>0$, with $f_{\ep}=0$ on $\Omega\setminus\Omega_1$, such that
\be
\mathrm{dist}\left(\supp f_{\ep}, \p\Omega_1\setminus\p\Omega\right)\geq\ep,
\ee
\be 
f_{\ep}\ra f \textrm{ in } L^p(\Omega_1)\quad\mathrm{as}\quad\ep\ra 0 \label{bdyf1}
\ee
and
\be
\int_{\Omega_1}|D_{y,z}f_{\ep}|~\mathrm{d}x^1\leq\int_{\Omega_1}|D_{y,z}f|~\mathrm{d}x^1 + \ep \label{bdyf2}.
\ee
\label{bdy}
\end{proposition}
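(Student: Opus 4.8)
The plan is to combine the strictly-interior cut-off of Proposition~\ref{f_prop} — now applied only to the portion $\partial\Omega_1\setminus\partial\Omega$ of the patch boundary lying in the interior of the sample — with a reflection/extension argument across the sample boundary $\partial\Omega$ itself. As in Proposition~\ref{f_prop}, Proposition~\ref{bounded} lets us assume from the outset that $f\in L^\infty$. Split $\partial\Omega_1=(\partial\Omega_1\setminus\partial\Omega)\sqcup(\partial\Omega_1\cap\partial\Omega)$.

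\emph{Interior boundary.} Around the relatively closed set $\partial\Omega_1\setminus\partial\Omega$ we run the construction of Proposition~\ref{f_prop} essentially verbatim: using the hypothesis $\mathcal H^2(\partial\Omega_1\setminus\mathcal F\Omega_1)=0$ we cover this part of $\partial\Omega_1$ by finitely many flat cylinders together with an $\mathcal H^2$-small exceptional family of balls, subtract the corresponding shadows, Friedrichs-mollify and cut off, at a total cost of at most $C\epsilon$ of additional planar curl. In addition we carve out of $\Omega_1$ a thin collar — of width comparable to the mollification radius $r$ fixed below — around the ``transition curve'' $\partial\Omega_1\cap\partial\Omega$; since $f$ is bounded and this collar has volume $o(1)$, this costs only $o(1)$ of curl and it guarantees that the later mollification cannot push the support out of $\Omega_1$ through $\partial\Omega$. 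Call the resulting bounded function $\tilde f$: its support lies at distance $\geq c>0$ from $\partial\Omega_1\setminus\partial\Omega$, and $\int_{\Omega_1}|D_{y,z}\tilde f|\,\dx^1$ exceeds $\int_{\Omega_1}|D_{y,z}f|\,\dx^1$ by at most $C\epsilon$.

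\emph{Sample boundary.} The remaining part of the boundary of $\supp\tilde f$ (apart from the smooth interior faces just produced) is a piece of $\partial\Omega$. Since $\overline{\Omega_1}\cap\partial\Omega$ is compact, cover it by finitely many coordinate charts in which $\partial\Omega$ is a Lipschitz graph over a coordinate plane, extend $\tilde f$ across $\partial\Omega$ in each chart by even reflection in a direction transverse to $\partial\Omega$, and glue with a partition of unity: this is the standard BV-extension construction \cite{amb}, and it is \emph{trace-matching}, i.e.\ the traces of the extension $\bar f$ from the two sides of $\partial\Omega$ agree, so the planar-variation measure $|D_{y,z}\bar f|$ carries no singular part on $\partial\Omega$. (When $\partial\Omega$ is locally a slip plane $\{x^1=\mathrm{const}\}$ the reflection is purely in $x^1$ and leaves the planar variation unchanged, so the transverse direction can always be chosen non-degenerately.) Each local reflection multiplies the planar variation only by a factor $1+C\,\mathrm{Lip}$; hence, since $|D_{y,z}\tilde f|$ is a finite measure carrying \emph{no} mass on $\partial\Omega\cap\Omega_1$ (because $\Omega_1\subset\Omega$), the extra planar variation that $\bar f$ contributes in the slab $\{\mathrm{dist}(\cdot,\partial\Omega)<Cr\}$ is bounded by $C\,|D_{y,z}\tilde f|\bigl(\{\mathrm{dist}(\cdot,\partial\Omega)<Cr\}\cap\Omega_1\bigr)$, which tends to $0$ as $r\to0$ by absolute continuity of a finite measure.

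\emph{Conclusion.} Mollify $\bar f$ with a Friedrichs kernel of radius $r$, chosen so small that (i) $r<c$, whence the mollified support still lies in $\Omega_1$ and remains at distance $\geq\epsilon$ from $\partial\Omega_1\setminus\partial\Omega$, and (ii) the extra variation from the reflection in the $r$-collar of $\partial\Omega$ is $<\epsilon$. Restricting to $\Omega$ and relabelling $\epsilon$ produces $f_\epsilon\in C^\infty(\Omega)$ with $f_\epsilon=0$ on $\Omega\setminus\Omega_1$; convolution does not increase total variation (cf.\ the use of Thm.~2.2(b) of \cite{amb} in the proof of Proposition~\ref{f_prop}), so adding up the interior-cut-off cost $C\epsilon$ and the boundary cost $<\epsilon$ gives \eqref{bdyf2}, while \eqref{bdyf1} is standard convolution theory. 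The main obstacle is the sample-boundary step: one must extend $\tilde f$ across the merely Lipschitz surface $\partial\Omega$ in a way that is at once compatible with the $x^1$-slicing defining $\|\curl_{\mathcal M}\cdot\|$, trace-matching (so that no spurious curl is concentrated on $\partial\Omega$ — which already rules out the naive extension by zero), and only mildly variation-increasing (so that the part of the added variation localised near $\partial\Omega$ can be absorbed into $\epsilon$); it is precisely the first two requirements that force the reflection-based extension, and the third that forces one to use the absolute continuity of $|D_{y,z}\tilde f|$ rather than just its finiteness.
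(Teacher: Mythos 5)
Your overall architecture (reduce to bounded $f$ via Proposition \ref{bounded}, handle $\p\Omega_1\setminus\p\Omega$ with the interior cut-off machinery of Proposition \ref{f_prop}, extend across $\p\Omega$ so that mollification does not charge the sample boundary, then mollify and restrict) matches the paper's, but the step you yourself single out as the main obstacle --- the extension across $\p\Omega$ --- is where your argument has a genuine gap. You extend by even reflection in a direction transverse to $\p\Omega$ and claim each local reflection multiplies the \emph{planar} variation by $1+C\,\mathrm{Lip}$. That estimate is the standard one for the full variation $|D\bar f|$; it does not transfer to the slice-wise quantity $\int|D_{y,z}\cdot|\,\mathrm{d}x^1$ unless the reflection preserves the slices $\{x^1=t\}$. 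Wherever $\p\Omega$ is locally a Lipschitz graph $\{x^1=\gamma(y,z)\}$ with $\gamma$ non-constant (a boundary normal with a nontrivial $e_1$-component is unavoidable for a general Lipschitz $\Omega$), the only transverse reflections are of the form $x^1\mapsto 2\gamma(y,z)-x^1$, and on a fixed slice the extended function is $(y,z)\mapsto f(2\gamma(y,z)-t,y,z)$: its planar variation picks up a term of the type $\nabla_{y,z}\gamma\cdot D_{x^1}f$, and the hypotheses give \emph{no} control on $D_{x^1}f$ ($f$ is only assumed to be of bounded variation within a.e.\ slice, not in the normal direction). Your parenthetical only covers the degenerate case $\gamma\equiv\mathrm{const}$. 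For the same reason the claim ``trace-matching, hence no singular part of $|D_{y,z}\bar f|$ on $\p\Omega$'' is unjustified: the relevant traces are the one-dimensional traces on $\p\Omega_t$ \emph{within} each slice, and a reflection that mixes slices does not match them.

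The paper avoids this by making the extension slice-wise: on a.e.\ slice $\Omega_t$ it applies the planar BV extension operator (Proposition 3.21 of \cite{amb}) to $f|_{\Omega_t}$, obtaining $\bar f(t,\cdot)\in BV(\R^2)$ with $|D_{y,z}\bar f|(\p\Omega_t)=0$ and $\|\bar f\|_{BV(\R^2)}\le L\|f\|_{BV(\Omega_t)}$, with $L$ controlled by the Lipschitz constant of $\p\Omega$. This extension is compatible with the $x^1$-slicing by construction, and the remainder of your argument (three-dimensional mollification, the estimate $\int|D_{y,z}\bar f_{\ep}|(\Omega_t)\,\mathrm{d}t\le\int|D_{y,z}\bar f|((\Omega^{\ep})_t)\,\mathrm{d}t$, and your collar/absolute-continuity step, which the paper phrases as monotone convergence using that $D_{y,z}\bar f$ does not charge $\p\Omega_t$) then goes through essentially as you wrote it; the paper simply performs the interior cut-off between two extend-and-mollify passes rather than before a single one. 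So the fix is to replace your three-dimensional reflection by a two-dimensional, per-slice extension; as it stands, the proposal does not establish \eqref{bdyf2}.
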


\begin{proof}
Once more, we may assume that $f$ is bounded. Now focus attention on a slice $\Omega_t$ through $\Omega$. By Proposition 3.21 of \cite{amb}, for a.e. $t$ we can extend $f$ on $\Omega_t$, continuously in BV, to a function $\bar{f}\in BV({\mathbb{R}^2})$, with compact support, such that $|D_{y,z}\bar{f}|(\p\Omega_t) = 0$ (i.e. the sample boundary remains uncharged). Also, by inspecting the proof of this proposition, one can see that the BV operator norm of the extension map $f\mapsto\bar{f}$ is controlled by the Lipschitz constant of $\p\Omega_t$, which in turn is controlled by that of $\p\Omega$. In other words,
\be
\exists~L(\p\Omega)>0: \|\bar{f}\|_{BV(\mathbb{R}^2)}\leq L\|f\|_{BV(\Omega_t)},~\mathrm{for~a.e.~} t\in\R:\Omega_t\neq\emptyset.
\label{lip}
\ee

We also use $\bar{f}$ to denote the whole family of extensions over all $\Omega_t$.

Next, applying a 3-d Friedrichs mollifier to $\bar{f}$, and denoting the result by $\bar{f}_{\ep}$, we get
\bq
\iiint_{\Omega} |D_{y,z}\bar{f}_{\ep}| & \leq & \iiint_{\Omega^{\ep}} |D_{y,z}\bar{f}|\\
 & = & \int\mathrm{d}t\iint_{(\Omega^{\ep})_t} |D_{y,z}\bar{f}|~\mathrm{d}y\mathrm{d}z,
\eq
where $\Omega^{\ep}$ is the $\ep$-neighbourhood of $\Omega$, and the first inequality follows from Theorem 2.2(b) of \cite{amb}.

Furthermore, for each $t$, it follows that
\be
|D_{y,z}\bar{f}|((\Omega^{\ep})_t)  \searrow  |D_{y,z}\bar{f}|(\overline{\Omega}_t) = |D_{y,z}f|(\Omega_t),
\ee
as $\ep\ra 0^+$, since $D_{y,z}\bar{f}$ does not charge $\p\Omega$. 

Thus, we get, by monotone convergence and (\ref{lip}),
\be
\limsup_{\ep\ra 0^+}\iiint_{\Omega} |D_{y,z}(\bar{f}_{\ep})|~\mathrm{d}t\mathrm{d}y\mathrm{d}z\leq \int\mathrm{d}t~|D_{y,z}f|(\Omega_t),
\ee
as required. $L^p$-convergence of the mollified extensions is once again completely standard.

Once we have our smooth $\bar{f}_{\ep}$ on $\Omega$, we can find an $\Omega_1^{\ep}\subset\Omega_1$ such that $2\ep\leq\mathrm{dist}\left(\Omega_1^{\ep}, \p\Omega_1\setminus\p\Omega\right)\leq 3\ep$, and then cut off $\bar{f}_{\ep}$ on $\p\Omega_1^{\ep}\setminus\p\Omega$ for an $\Omega_1^{\ep}\subset\Omega_1$, by precisely the method of Proposition \ref{f_prop}, thereby essentially reducing the curl, and barely changing the $L^p$-norm.

Finally, we repeat the first part of the proof (extend across the domain boundary then smooth), but this time applied to the interior cut off of $\bar{f}_{\ep}$, rather than the initial $f$. This gives the required result if the mollifier is chosen fine enough.
\label{extension}
\end{proof}

\begin{remark}
\label{bdy_lam}
The smoothed extension constructed above can be used to laminate $\beta$ in the neighbourhood of the domain boundary, on a slip patch $\Omega_1$, in the case when $\p\Omega_1$ intersects $\p\Omega$.
\end{remark}

\section{The full relaxation statement}\label{sec:5}

By applying Proposition \ref{f_prop} or \ref{bdy} separately to each of the Burgers components of the plastic distortion $\beta$, for an optimal decomposition $s=\sum_ic_ib_i$, we can cut-off and laminate $\beta$ independently on neighbouring slip patches, and it follows that our lamination and smoothing results can be combined to arrive at the desired relaxation theorem.


\begin{theorem}
\label{thm:main}
Suppose we have a Lipschitz domain $\Omega\in\mathbb{R}^3$, and that $\overline{\Omega} = \bigcup_{i=1}^{\infty}\overline{\Omega_i}$, where the domains $\Omega_i$ are pairwise disjoint and satisfy the regularity condition $\mathcal{H}^2(\p\Omega_i\setminus\mathcal{F}\Omega_i)=0$, and suppose that we have $(u,\beta)$ on $\Omega$, such that $u\in H^1$ satisfies a Dirichlet condition on a Lipschitz subset of $\p\Omega$, $\beta = s\otimes m$ satisfies the relaxed-slip condition \textbf{(RSC)} with $m = m_i\in\mathcal{M}$ on each $\Omega_i$, and the relaxed energy (\ref{rel_energy}) is finite. Then, for each $\ep>0$, there exists a pair of test functions $(u_{\ep}, \beta_{\ep})$ satisfying the same Dirichlet condition and the single-slip condition \textbf{(SSC)}, such that $u_{\ep}\in H^1$, $\beta_{\ep}\in L^{\infty}$ and
\be
E(u_{\ep},\beta_{\ep}) \leq E_{\mathrm{rel}}(u,\beta) + \ep. 
\ee
\end{theorem}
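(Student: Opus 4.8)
The plan is to combine the three tools already established---the truncation of Proposition~\ref{prop:bounded}, the curl-decreasing interior smoothing of Propositions~\ref{f_prop} and~\ref{bdy}, and the lamination of Section~\ref{sec:2} (Proposition~\ref{prop:rel_energy})---distributing the error over the countably many $\Omega_i$ with a geometric budget $\ep\,2^{-i}$. Some preliminaries first. Because \textbf{(RSC)} holds, on each $\Omega_i$ we may write $s = c_1^{(i)}b_{1,i} + c_2^{(i)}b_{2,i}$, the $c_j^{(i)}$ being uniquely determined by $s$ (for fcc/bcc lattices one additionally picks, on each patch, the decomposition minimising the laminated curl there). Finiteness of $E_{\mathrm{rel}}(u,\beta)$ together with $\nabla u\in L^2$ and the identity $|(s\otimes m)_{\mathrm{sym}}|^2 = \tfrac12|s|^2$ forces $\beta_{\mathrm{sym}}\in L^2(\Omega)$, hence $c_j^{(i)}\in L^2(\Omega_i)$ with $\sum_i\|c_j^{(i)}\|_{L^2(\Omega_i)}^2<\infty$ (the conditioning of $\{b_{1,i},b_{2,i}\}$ is uniformly bounded since $\mathcal M$ is finite) and $\int_{\Omega_i}|D_{y,z}c_j^{(i)}|\,\dx^1<\infty$ for each $i$; moreover the elastic term, the laminated curl and the laminated hardening are all additive over the disjoint $\Omega_i$ (each is an integral, resp.\ a sum of planar total variations computed slice-by-slice on each $\Omega_i$) and sum to the finite number $E_{\mathrm{rel}}(u,\beta)$. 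As a first reduction, truncating all the $c_j^{(i)}$ at a common level $n$ via the last assertions of Proposition~\ref{prop:bounded} preserves \textbf{(RSC)}, does not increase the laminated curl or the laminated hardening on any patch, and (dominated convergence) brings the elastic term within $\ep$ of its original value once $n$ is large; so we may assume henceforth that $\beta\in L^\infty(\Omega)$.

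Now smooth. On each $\Omega_i$ apply Proposition~\ref{f_prop} (if $\overline{\Omega_i}\subset\Omega$) or Proposition~\ref{bdy} (if $\p\Omega_i$ meets $\p\Omega$) to each bounded component $c_j^{(i)}$, with the proposition's parameter chosen small enough that $\|c_{j,\ep}^{(i)}-c_j^{(i)}\|_{L^2(\Omega_i)}\le\ep\,2^{-i}$ (whence also $L^1$-closeness, which controls the hardening when $\tau>0$) and $\int_{\Omega_i}|D_{y,z}c_{j,\ep}^{(i)}|\,\dx^1\le\int_{\Omega_i}|D_{y,z}c_j^{(i)}|\,\dx^1+\ep\,2^{-i}$. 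The resulting $c_{j,\ep}^{(i)}$ are $C^\infty$, no larger in sup-norm than $c_j^{(i)}$, and supported compactly inside $\Omega_i$ (respectively at distance $\ge\ep\,2^{-i}$ from $\p\Omega_i\setminus\p\Omega$). Setting $\tilde\beta_\ep := \big(c_{1,\ep}^{(i)}b_{1,i}+c_{2,\ep}^{(i)}b_{2,i}\big)\otimes m_i$ on $\Omega_i$ yields a $C^\infty$ distortion satisfying \textbf{(RSC)}, uniformly bounded, whose support on each $\Omega_i$ is a smooth compact subset of $\Omega_i$ (up to $\p\Omega$); by the $\ell^2$-summability of the componentwise $L^2$-errors we have $\tilde\beta_\ep\to\beta$ in $L^2(\Omega)$ as $\ep\to0$, so for $\ep$ small the elastic term changes by at most $C\ep$, while by the curl estimate $\|\curl_{\mathcal M}\tilde\beta_\ep\|_{\mathrm{lam}}\le\|\curl_{\mathcal M}\beta\|_{\mathrm{lam}}+C\ep$ and likewise for the hardening. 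Thus $E_{\mathrm{rel}}(u,\tilde\beta_\ep)\le E_{\mathrm{rel}}(u,\beta)+C\ep$, and---crucially---$(u,\tilde\beta_\ep)$ now meets, on each $\Omega_i$, the hypotheses of the lamination result.

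Finally laminate. On each $\Omega_i$ run the construction of Section~\ref{sec:2} (Proposition~\ref{prop:rel_energy}, with Remark~\ref{bdy_lam} when $\p\Omega_i$ meets $\p\Omega$): this produces, for each $n$, an \textbf{(SSC)}-admissible laminate $\beta_n^{(i)}$ (equal to $2c_{j,\ep}^{(i)}b_{j,i}\otimes m_i$ on thin cylinders and $0$ on the rest of $\Omega_i$) together with a zig-zag displacement correction $\hat u_n^{(i)}$ supported in an arbitrarily thin neighbourhood of $\supp\tilde\beta_\ep|_{\Omega_i}$ inside $\Omega_i$---hence vanishing on $\p\Omega$---such that the elastic, curl and hardening contributions on $\Omega_i$ converge as $n\to\infty$ to those of $(u,\tilde\beta_\ep)$; choose $n=n(i)$ so large that each of these is within $\ep\,2^{-i}$. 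Put $u_\ep = u+\sum_i\hat u_n^{(i)}$ and $\beta_\ep = \beta_n^{(i)}$ on $\Omega_i$. Since $\nabla\hat u_n^{(i)} = \big(\beta_n^{(i)}-\tilde\beta_\ep\big)+\mathcal O(2^{-n})$ (with an $\mathcal O(1)$ gradient confined to a set of vanishing measure near $\p\Omega_i$), and $\int_{\Omega_i}|\beta_n^{(i)}|^2$, $\int_{\Omega_i}|\tilde\beta_\ep|^2$ are each dominated by a fixed multiple of $\int_{\Omega_i}|s|^2$ (summable to $\|s\|_{L^2(\Omega)}^2$), the Poincar\'e inequality on the thin cylinders gives $\sum_i\|\hat u_n^{(i)}\|_{H^1(\Omega_i)}^2<\infty$; as the $\hat u_n^{(i)}$ have pairwise disjoint supports and all vanish on $\p\Omega$, it follows that $u_\ep\in H^1(\Omega)$ with $u_\ep = u$ on $\p\Omega$, in particular on the Dirichlet boundary. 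Moreover $\beta_\ep\in L^\infty(\Omega)$ is uniformly bounded and satisfies \textbf{(SSC)} a.e., and since $\beta_\ep$ vanishes near every patch boundary its actual curl on $\Omega$ agrees with the sum of the per-patch laminate curls. Summing the contributions over $i$ and adding the preliminary truncation error, $E(u_\ep,\beta_\ep)\le E_{\mathrm{rel}}(u,\beta)+C\ep$ for a constant $C$ depending only on $E_{\mathrm{rel}}(u,\beta)$; running the whole argument with $\ep$ replaced by $\ep/C$ gives the stated bound.

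The substance of the theorem lies in the cited propositions: the genuine difficulty is the one already overcome in Propositions~\ref{f_prop}--\ref{bdy}, namely producing smooth approximants to the Burgers components whose supports do not spill across the patch boundaries $\p\Omega_i$---so that neighbouring patches remain disjoint and, near $\p\Omega$, the Dirichlet datum is left intact---while the planar total variation grows by no more than a prescribed amount; this is precisely where the ``no-cuts'' condition $\mathcal H^2(\p\Omega_i\setminus\mathcal F\Omega_i)=0$ is indispensable. What remains for the theorem itself is essentially bookkeeping, the one point requiring a little attention being the summation over the countably many patches and the verification, via the disjoint-support and Poincar\'e estimates above, that the glued displacement $u_\ep$ genuinely lies in $H^1(\Omega)$.
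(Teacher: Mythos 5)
Your proposal is correct and follows exactly the route the paper itself takes (which it states only in a single sentence before the theorem): truncate via Proposition~\ref{prop:bounded}, smooth each Burgers component patchwise via Proposition~\ref{f_prop} or~\ref{bdy}, then laminate as in Section~\ref{sec:2}, with the countable collection of patches handled by a summable error budget. Your write-up in fact supplies more of the bookkeeping (the $\ep\,2^{-i}$ distribution, the $L^2$-control of $s$ from the finite elastic energy, and the $H^1$-gluing of the disjointly supported displacement corrections) than the paper does.
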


\begin{remark}
Of course, what we would really like is to find the quasi-convex envelope of $E$ for general domains $\Omega$, and then prove the existence of minimisers. Note that, for the simple case where a slip plane connects free surfaces, one can achieve $E_{\textrm{rel}} = 0$ with a simple, relaxed shear (see, for example, the $L>2$ result from \cite{ang}) -- hence our theorem shows that the required envelope is in fact zero in this case. 
\end{remark}

\begin{remark}
Note that, while the class of $\beta$ we are considering here is rather large, we could, instead, allow those $\beta$ that can be approximated by smooth, compactly supported plastic distortions. As shown in Section \ref{sec:3} (which we think is of some independent interest), this class is no smaller than the one used in the above theorem -- it is, however, much harder to characterize. In the context of the previous remark, we believe that minimisers of the relaxed energy (with sufficiently smooth data) would satisfy the requirements of~\ref{thm:main}.
\end{remark}

\begin{remark}
\label{rem:non-infinite}
A $\int|\beta|^{\alpha}$ penalty term with $\alpha<1$ relaxes to zero, as one can see quite easily.
Thus, for a given single-slip one can laminate between the same kind of slip and zero slip, which strictly reduces the energy. Iterating this procedure to get ever finer laminates gives zero energy in the limit. It follows that if we replace $\int|\beta|$ with $\int|\beta|^{\alpha}$ then $E_{\mathrm{rel}}$ should just contain the elastic energy and the laminated curl.
\end{remark}

\section{Nonlinear elasto-plasticity}\label{sec:4}

We now show that the foregoing analysis of the geometrically linear problem can be carried over, with little additional work, to the nonlinear case. Here, we have in mind the usual multiplicative decomposition of the strain, $F = F^{\textrm{el}}(F^{\textrm{pl}})^{-1}$, so that the elastic deformation is given by
\be
F^{\mathrm{el}}(u,\beta) = (I + \nabla u)(I - \beta),
\ee
where $u$ is once again the material displacement, and, on a given slip patch, $\beta = s\otimes m$ (with $s\in m^{\perp}$) is the plastic slip, such that the family of possible slip systems has the same form as before. 

The non-relaxed, nonlinear energy to be considered is now
\be
E^{\textrm{(nl)}}(u,\beta) =
\left\{
\begin{array}{ccl}
\int_{\Omega} W_{\textrm{el}}(F^{\mathrm{el}})~\dx  + 
\sigma\|\curl_{\mathcal{M}}\beta\|_{\Omega} + \tau\int_{\Omega} |\beta|\dx & : & \textbf{(SSC)}~\textrm{holds},\\
 & & \\
+\infty & : &  \textrm{otherwise}, 
\label{nonlinear}
\end{array}
\right.
\ee
for some elastic-energy function $W_{\textrm{el}}$, and the relaxed, nonlinear energy is
\be
E^{\textrm{(nl)}}_{\mathrm{rel}}(u,\beta) = \left\{
\begin{array}{ccl}
\int_{\Omega} W_{\textrm{el}}(F^{\mathrm{el}})~\dx + \sigma\|\curl_{\mathcal{M}}\beta\|_{\mathrm{lam}} + \tau\|\beta\|_{1-\mathrm{lam}} & : & \textbf{(RSC)}~\textrm{holds},\\
 & & \\
+\infty & : & \textrm{otherwise},
\label{rel_nonlinear}
\end{array}
\right.
\ee

Here, we still use the linearised version of the density measure of the geometrically necessary dislocations. For an in-depth discussion on the controversy about which term is correct in the geometrically non-linear setting we refer to~\cite{Cermelli_2001} and~\cite{Reina_13}.

\begin{theorem}\label{thm:nonlinear}
Suppose that $W_{\textrm{el}}:\mathbb{M}^{3\times 3}\mapsto [0,\infty)$ is continuous and satisfies the $p$-growth condition
\be\label{W}
 -c_1 + c_2|F|^p \le W_{\textrm{el}}(F) \le C_1 + C_2|F|^p.
\ee
Suppose furthermore, as in Theorem~\ref{thm:main}, that we have a Lipschitz domain $\Omega\subset\mathbb{R}^3$, and that $\overline{\Omega} = \bigcup_{i=1}^{\infty}\overline{\Omega_i}$, where the domains $\Omega_i$ are pairwise disjoint and satisfy the regularity condition $\mathcal{H}^2(\p\Omega_i\setminus\mathcal{F}\Omega_i)=0$, and that we have $(u,\beta)$ on $\Omega$, such that $u\in W^{1,p}$ satisfies a Dirichlet condition on a Lipschitz subset of $\p\Omega$, $\beta = s\otimes m$ satisfies the relaxed-slip condition \textbf{(RSC)} with $m = m_i\in\mathcal{M}$ on each $\Omega_i$, and the relaxed energy (\ref{rel_nonlinear}) is finite. Then, for each $\ep>0$, there exists a pair of test functions $(u_{\ep}, \beta_{\ep})$ satisfying the same Dirichlet condition and the single-slip condition \textbf{(SSC)}, such that $u_{\ep}\in W^{1,p}$, $\beta_{\ep}\in L^{\infty}$ and
\be
E^\textrm{(nl)}(u_{\ep},\beta_{\ep}) \leq E^\textrm{(nl)}_{\mathrm{rel}}(u,\beta) + \ep. 
\ee
\end{theorem}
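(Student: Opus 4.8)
The plan is to reduce Theorem~\ref{thm:nonlinear} to the machinery already assembled for the geometrically linear case, replacing the $L^2$-convergence of the elastic term by a $W^{1,p}$-convergence argument that exploits the $p$-growth bound~\eqref{W}. First I would observe that the curl and dissipation terms in~\eqref{nonlinear} and~\eqref{rel_nonlinear} are literally the same as in~\eqref{eq:main_energy} and~\eqref{rel_energy}; hence the smoothing results (Propositions~\ref{f_prop} and~\ref{bdy}) and the lamination construction of Section~\ref{sec:2} apply verbatim to produce, from a finite-energy $(u,\beta)$ satisfying \textbf{(RSC)}, first a smooth compactly-supported relaxed $\beta$ (with the Burgers components $c_i$ mollified separately on each slip patch, cut off on strictly-interior approximations $\Omega_i^\ep$, and re-mollified so that \textbf{(SSC)} is recovered and the laminated curl and hardening increase by at most $\ep$), and then a single-slip laminate $\beta_n$ with $\beta_n\wk\beta$ in every $L^q$, $q<\infty$, with $\|\curl_\mathcal{M}\beta_n\|_\Omega\to\|\curl_\mathcal{M}\beta\|_{\mathrm{lam}}$ and $\int_\Omega|\beta_n|\,\dx\to\|\beta\|_{1-\mathrm{lam}}$.

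The only genuinely new point is the elastic energy. For this I would use exactly the zig-zag displacement corrector $\hat u_n$ of Section~\ref{sec:2}: setting $u_n = u + \hat u_n$ one has, by~\eqref{disp}, $\nabla u_n = \nabla u + \hat\beta_n + \mathcal{O}(2^{-n})$ with $\hat\beta_n = \beta_n - \beta$, uniformly on $\Omega$ (and $\nabla\hat u_n = \mathcal{O}(1)$ on the $o(1)$-thin boundary collar, whose measure tends to $0$). Then
\be
F^{\mathrm{el}}(u_n,\beta_n) = (I + \nabla u_n)(I - \beta_n) = (I + \nabla u)(I - \beta) + \mathcal{O}(2^{-n}) = F^{\mathrm{el}}(u,\beta) + \mathcal{O}(2^{-n})
\ee
pointwise, uniformly, away from the collar, because the first-order terms in $\hat\beta_n$ cancel exactly by the algebraic identity $(I+\nabla u + \hat\beta_n)(I-\beta-\hat\beta_n) = (I+\nabla u)(I-\beta) + \hat\beta_n(I - \beta) - (I+\nabla u)\hat\beta_n - \hat\beta_n^2$, and on a single-slip patch the cross terms involving $\hat\beta_n$ are the same rank-one-times-$m$ expressions that~\eqref{2_slice} was designed to absorb into $\nabla\hat u_n$ up to $\mathcal{O}(2^{-n})$. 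Since $W_{\mathrm{el}}$ is continuous, $W_{\mathrm{el}}(F^{\mathrm{el}}(u_n,\beta_n))\to W_{\mathrm{el}}(F^{\mathrm{el}}(u,\beta))$ pointwise a.e.; the upper $p$-growth bound together with the uniform $L^\infty$ control on $\nabla u_n - \nabla u$ and $\beta_n$ (hence an $L^p$-dominating function, using $u\in W^{1,p}$) lets dominated convergence conclude $\int_\Omega W_{\mathrm{el}}(F^{\mathrm{el}}(u_n,\beta_n))\,\dx\to\int_\Omega W_{\mathrm{el}}(F^{\mathrm{el}}(u,\beta))\,\dx$. On the boundary collar the integrand is $\mathcal{O}(1)$ on a set of measure $o(1)$, so its contribution vanishes; and $u_n = u$ on $\p\Omega$ by construction of the tent-shaped corrector, so the Dirichlet condition is preserved and $u_n\in W^{1,p}$. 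A diagonal argument in $\ep$ and $n$ then gives the claimed pair $(u_\ep,\beta_\ep)$ with $E^{\mathrm{(nl)}}(u_\ep,\beta_\ep)\le E^{\mathrm{(nl)}}_{\mathrm{rel}}(u,\beta)+\ep$.

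The main obstacle I anticipate is the bookkeeping that shows the elastic-energy error is truly uniform: one must check that the corrector estimate $\nabla\hat u_n = \hat\beta_n + \mathcal{O}(2^{-n})$ from~\eqref{disp}, which was stated for smooth $\beta$ on the interior of a slip patch, survives the two-stage smoothing-then-lamination procedure when the slip patches $\Omega_i$ are merely of finite perimeter and only countably many, and when the Burgers decomposition $s=\sum c_i b_i$ is patch-dependent. Concretely, the $\mathcal{O}(2^{-n})$ constant depends on $\|\nabla_{y,z} c_i\|_\infty$ and on the geometry of $\supp\beta$ inside $\Omega_i$, so one needs the smoothed, cut-off $\beta$ on each patch to be genuinely $C^\infty_0$ with controlled derivatives before laminating — which is exactly what Propositions~\ref{f_prop} and~\ref{bdy} deliver — and then one laminates patch by patch, the countability causing no trouble since for the error $\le\ep$ one only needs finitely many patches to carry all but $\ep$ of the energy (the tail being controlled because $E_{\mathrm{rel}}$ is finite and the elastic integrand is nonnegative). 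Everything else — the curl, the hardening, the side condition, the Dirichlet data — is handled by the linear-case arguments without change, since those terms never saw the elastic energy in the first place.
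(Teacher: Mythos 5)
Your reduction of the curl, dissipation and smoothing steps to the linear machinery is exactly what the paper does, and the second half of your argument (fix $u$, smooth $\beta$, use the growth bound \eqref{W} together with dominated convergence along an a.e.-convergent subsequence) also matches the paper's proof. The gap is in the one step that is genuinely new in the nonlinear setting: the elastic term under lamination. With the \emph{additive} corrector $u_n=u+\hat{u}_n$ from Section~\ref{sec:2}, your own algebraic identity, combined with $\hat{\beta}_n\beta=\hat{\beta}_n^2=0$ and $\nabla\hat{u}_n=\hat{\beta}_n+\mathcal{O}(2^{-n})$, gives
\[
(I+\nabla u_n)(I-\beta_n)=(I+\nabla u)(I-\beta)-(\nabla u)\hat{\beta}_n+\mathcal{O}(2^{-n}),
\]
and the cross term $(\nabla u)\hat{\beta}_n$ does \emph{not} cancel and is not $\mathcal{O}(2^{-n})$: $\hat{\beta}_n$ oscillates with amplitude of order $|c_1b_1-c_2b_2|$ and converges to zero only weakly-$*$. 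Consequently $F^{\mathrm{el}}(u_n,\beta_n)$ converges to $F^{\mathrm{el}}(u,\beta)$ only weakly, and for a general continuous, non-convex $W_{\textrm{el}}$ the energies converge to an average of the form $\tfrac12\bigl[W_{\textrm{el}}(F+G)+W_{\textrm{el}}(F-G)\bigr]$ with $G=(\nabla u)(c_1b_1-c_2b_2)\otimes m$, which in general strictly exceeds $W_{\textrm{el}}(F)$; so your sequence need not recover $E^{\textrm{(nl)}}_{\mathrm{rel}}$. Your suggestion that these cross terms are "absorbed into $\nabla\hat{u}_n$" via \eqref{2_slice} cannot be implemented as stated: absorbing them would require the slice-wise slope to be $(I+\nabla u)(s-2c_jb_j)$, which destroys the telescoping of $\hat{u}_n$ to zero across each bi-layer and would involve second derivatives of a merely $W^{1,p}$ displacement.

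The repair, and the paper's one new idea here, is to perturb the deformation \emph{multiplicatively} rather than additively: set $\hat{y}_n=\mathrm{id}+\hat{u}_n$, $y_n=y\circ\hat{y}_n$ and $u_n=y_n-\mathrm{id}$, so that $\nabla y_n=(\nabla y)(\nabla\hat{y}_n)=(\nabla y)(I+\nabla\hat{u}_n)$. Then, since $\hat{\beta}_n(\beta+\hat{\beta}_n)=0$, one has $(I+\nabla\hat{u}_n)(I-\beta-\hat{\beta}_n)=I-\beta+\mathcal{O}(2^{-n})$, hence $F^{\mathrm{el}}(u_n,\beta_n)=F^{\mathrm{el}}(u,\beta)+\mathcal{O}(2^{-n})\,\nabla y$, which converges strongly in $L^p$ because $\nabla y\in L^p$; dominated convergence then applies exactly as you intended. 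With this substitution (and the same boundary interpolation on an $o(1)$-measure collar), the rest of your proposal stands.
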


\begin{proof}
First, suppose we have a pair of relaxed test functions $(u,\beta)$ on a slip patch $\Omega_1$ of the type stipulated above, with finite energy, such that $\beta = s(x)\otimes m$ is smooth. Now laminate exactly as before to get a new function $\beta_n = \beta + \hat{\beta}_n$ and perturbations $\hat{u}_n$ and $\hat{\beta}_n$, such that (\ref{disp}) holds.

Put $y_n = y\circ\hat{y}_n$ $(\Rightarrow\nabla y_n = (\nabla y)(\nabla\hat{y}_n))$, such that $\hat{y}_n$ is defined by $\hat{y}_n = Ix + \hat{u}_n$. If we set $u_n = -Ix + y_n$, then, away from an $o(1)$-thin neighbourhood of $\p\Omega_1$, we get
\bq
F^{\mathrm{el}}(u_n,\beta_n) & = & (I + \nabla u_n)(I - \beta_n)\\
 & = & (\nabla y)(\nabla\hat{y}_n)(I - \beta - \hat{\beta}_n)\\
 & = & \nabla y(I + \nabla\hat{u}_n)(I - \beta - \hat{\beta}_n)\\
 & = & \nabla y(I - \beta) - (\nabla y)(\nabla\hat{u}_n)\beta + \mathcal{O}(1/2^n)\\
 & = & F^{\mathrm{el}}(u,\beta) + \mathcal{O}(1/2^n),
\eq
since $(\nabla\hat{u}_n)\beta = (\hat{\beta}_n + \mathcal{O}(1/2^n))\beta$ and $(\hat{\beta}_n)(\beta) = 0$. Also, we may interpolate $u_n$ to zero near $\p\Omega_1$ in the same way as before, so that $|\nabla u_n|$ is $\mathcal{O}(1)$ on a set of Lebesgue measure $o(1)$. Hence, by (\ref{W}), dominated convergence and continuity, we get, by taking an a.e.~converging subsequence, $\int_\Omega W_{\textrm{el}}(u_{n_k},\beta_{n_k})\ra\int_\Omega W_{\textrm{el}}(u,\beta)$ as $k\ra\infty$.

Next, we need to show that, given relaxed $(u,\beta)$ with finite energy and $(I + \nabla u)(I - \beta)\in L^p$, we can smooth $\beta$ without increasing the energy. By Proposition \ref{prop:bounded} and dominated convergence, $\beta$ may be assumed bounded, and then we know that our previous smoothing method gives a uniformly bounded, smooth family $\beta_{\ep}$ which takes care of the curl and hardening terms. It is also easy to see that the method gives $\beta_{\ep}\ra\beta\in L^p$ as $\epsilon\ra 0, q\in[1,\infty)$. Thus, possibly passing to a subsequence $\ep_k$ to get a.e. convergence, dominated convergence gives $\int_{\Omega} W_{\textrm{el}}(u,\beta_{\ep_k})\ra \int_{\Omega} W_{\textrm{el}}(u,\beta)$ as $\ep_k\ra 0$ since we have $I + \nabla u\in L^p$.

This completes the proof.

\end{proof}

\end{document}